\font\Bbb=msbm10 at 10 truept
\def\n{\hbox{\Bbb N}}
\def\z{\hbox{\Bbb Z}}
\def\mod{\hbox{ mod }}
\newcommand{\qed}{
  \ifmmode
   \eqno{\qedsymbol}
  \else
    \leavevmode\unskip\penalty9999 \hbox{}\nobreak\hfill\hbox{\qedsymbol}
  \fi
}
\newcommand{\qedsymbol}{\leavevmode\vrule height 1.2ex width 1.1ex depth -.1ex}
\newenvironment{proof}{\begin{trivlist}\item[\hskip
\labelsep{\bf Proof.\quad}]}
{\hfill\qed\rm\end{trivlist}}
\newtheorem{theorem}{Theorem}[section]
\newtheorem{corollary}[theorem]{Corollary}
\newtheorem{proposition}[theorem]{Proposition}
\newtheorem{lemma}[theorem]{Lemma}
\title{Completely Regular Semigroups and the Discrete Log Problem}
\author{James Renshaw\\
\small School of Mathematical Sciences\\
\small University of Southampton\\
\small Southampton, SO17 1BJ, England\\
\small Tel: +44(0)2380593673\\
\small ORCID: 0000-0002-5571-8007\\
\small  j.h.renshaw@maths.soton.ac.uk}
\begin{document}
\maketitle
\date{January 2018}
\begin{abstract}
\noindent  We consider an application to the discrete log problem using completely regular semigroups which may provide a more secure symmetric cryptosystem than the classic system based on groups. In particular we describe a scheme that would appear, for some groups, to offer protection to a standard trial multiplication attack.
{\bf keywords} Semigroup, completely regular, discrete logarithm, cryptography
{\bf Mathematics Subject Classification} 2010: 11T71, 94A60, 20M10, 20M30
\end{abstract}

\section{Introduction and Preliminaries}
We refer the reader to~\cite{howie-95} for basic results and terminology in semigroups and in particular for the necessary background in completely regular semigroups. See also~\cite{renshaw-17} for the some background in applications of semigroup actions to the discrete log problem.

\smallskip

A semigroup $S$ is called a {\em completely simple} semigroup, if $S$ has no proper ideals and if the natural partial order on the idempotents, given by
$$
e\le f\text{ if and only if }e=ef=fe,
$$
is trivial. It can be shown by Rees' Theorem (\cite[Theorem~3.2.3]{howie-95}) that a completely simple semigroup is isomorphic to what is commonly referred to as a Rees Matrix Semigroup. A semigroup $S={\cal M}[G;I,\Lambda;P]$ is called a {\em Rees Matrix Semigroup over the group $G$} if for sets $I$ and $\Lambda$,
$$
S=I\times G\times\Lambda
$$
and $P=(p_{\lambda i})$ is a $\Lambda\times I$ matrix, referred to as the {\em sandwich matrix}, with entries in the group $G$, and where multiplication is given by
$$
(i,g,\lambda)(j,h,\mu) = (i,gp_{\lambda j}h,\mu).
$$
It is worth noting that a group $G$ is an example of a completely simple semigroup in which $|I|=|\Lambda| = 1$ and $P=(1_G)_{1\times1}$.

A semigroup $S$ is called {\em completely regular} if every element of $S$ belongs to a subgroup of $S$. It can be shown (see~\cite{howie-95}) that $S$ is completely regular if and only if $S$ is a {\em semilattice of completely simple semigroups}. That is to say, $S=\dot\bigcup_{\alpha\in Y} S_\alpha$ where each $S_\alpha$ is a completely simple semigroup and $Y$ is a semilattice, and where $S_\alpha S_\beta\subseteq S_{\alpha\wedge\beta}$. We shall denote this semigroup by $S={\cal S}[Y;S_\alpha]$.

\smallskip

Suppose now that $S={\cal S}[Y;S_\alpha]$ is a completely regular semigroup in which each $S_\alpha$ is a group. Then $S$ is called a {\em semilattice of groups}. It is in fact a {\em strong semilattice of groups} (see~\cite[Theorem 4.2.1]{howie-95}) in the sense that there are {\em structure maps} $\phi^\alpha_\beta : S_\alpha\to S_\beta$ for $\alpha\ge\beta$ with the properties
\begin{enumerate}
\item $(\forall \alpha\in Y)\;\phi^\alpha_\alpha = 1_{S_\alpha}$;
\item $(\forall\alpha, \beta, \gamma \in Y)\; \phi^\beta_\gamma\circ\phi^\alpha_\beta = \phi^\alpha_\gamma$;
\item $(\forall x\in S_\alpha, y\in S_\beta)\; xy = \left(\phi^{\alpha\phantom{\beta}}_{\alpha\wedge\beta}(x)\right)\left(\phi^\beta_{\alpha\wedge\beta}(y)\right)$.
\end{enumerate}

\bigskip

Let $G=(G,\cdot)$ be a group and let $p$ be a fixed element in $G$. Define a binary operation, $\ast$, on $G$ by
$$
x\ast y = xpy, \text{ for }x,y\in G.
$$
Then it is easy to see that the system $(G,\ast,p)$ is a group, with identity $p^{-1}$ and where the inverse of $x$ is given by the element $p^{-1}\cdot x^{-1}\cdot p^{-1}$ in $G$, and where $p^{-1}$ and $x^{-1}$ are the inverses of $p$ and $x$ in the original group $(G,\cdot)$. It is also easy to see that the map $(G,\cdot)\to (G,\ast,p)$ given by $x\mapsto xp^{-1}$ is a group isomorphism.

\medskip

Let $n$ be a positive integer and let $\z_n=\{0,1,\dots, n-1\}$ be the ring of integers modulo $n$. We are interested in the multiplicative structure of $\z_n$ and aim to show that $\z_n$, under multiplication, is a completely regular semigroup. The group of units modulo $n$ will be denoted by $U_n$. Note that $|U_n| = \phi(n)$ where $\phi$ is Euler's totient function. We will usually represent the units in $U_n$ by elements from the set of least non-negative residues. So, for example, if $p$ is prime then $U_p=\{1,\ldots, p-1\}$.

\medskip

Let $m$ be a positive integer and let $p_1, \ldots, p_m$ be distinct primes and let $I=\{1,\ldots, m\}$. Let $n=\prod_{i\in I}{p_i}$ and for any non-empty subset $S\subseteq I$, let $n_S=\prod_{i\in S}{p_i}$ and denote by $\overline S=I\setminus S$, so that $n=n_{\overline S}n_S$. Define
$$
U_S = \{n_{\overline S}x : x\in U_{n_S}\},
$$
where $U_{n_S}$ is the group of units modulo $n_S$ and let $U_\emptyset = \{0\}$.

\begin{proposition}
With the notation described above, for any non-empty subset $S\subseteq I$, $U_S$ is a subgroup of the multiplicative semigroup $\z_n$, and is isomorphic to $U_{n_S}$.
Moreover
$$
\z_n = \dot\bigcup_{S\subseteq I} U_S
$$
is a strong semilattice of groups, ${\cal S}[Y;U_S]$ in which $Y$ is the boolean algebra $\cal P(I)$.
\end{proposition}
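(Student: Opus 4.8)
The plan is to pass to the Chinese Remainder decomposition and read off everything coordinatewise, while using the ``sandwich'' construction $(G,\ast,p)$ recorded just above to pin down the isomorphism type of each $U_S$. First I would set up the ring isomorphism $\theta\colon\z_n\to\prod_{i\in I}\z_{p_i}$, $a\mapsto(a\bmod p_i)_{i\in I}$, which is in particular a homomorphism of multiplicative semigroups. Since each $p_i$ is prime, $\z_{p_i}$ is a field, so as a multiplicative semigroup it is the disjoint union $\{0\}\cup U_{p_i}$ and has no zero divisors. For $a\in\z_n$ write $\mathrm{supp}(a)=\{i\in I: a\not\equiv 0\pmod{p_i}\}$; this gives a partition $\z_n=\dot\bigcup_{S\subseteq I}V_S$ into support classes $V_S=\{a:\mathrm{supp}(a)=S\}$, and under $\theta$ one has $V_S\cong\prod_{i\in S}U_{p_i}$ (with $0$ in every coordinate outside $S$). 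The first substantial step is to identify $V_S$ with the set $U_S$ of the statement.

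For part~1, I would analyse the map $\psi\colon U_{n_S}\to\z_n$, $\psi(x)=n_{\overline S}x\bmod n$. Checking the coordinates of $\theta(\psi(x))$ shows that $n_{\overline S}x$ is a unit mod $p_i$ for $i\in S$ and is $0$ mod $p_i$ for $i\notin S$, so $\psi$ lands in $V_S$ and hence $U_S\subseteq V_S$. Well-definedness and injectivity of $\psi$ both reduce to the equivalence $n\mid n_{\overline S}(x-x')\iff n_S\mid(x-x')$, which holds because $n=n_{\overline S}n_S$; hence $|U_S|=|U_{n_S}|=\phi(n_S)=\prod_{i\in S}(p_i-1)=|V_S|$, giving $U_S=V_S$. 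The subtlety to flag is that $\psi$ is \emph{not} a homomorphism for ordinary multiplication (for instance $\psi(1)=n_{\overline S}$ is not idempotent in general). Instead, writing $p=n_{\overline S}\bmod n_S\in U_{n_S}$ (a unit since $\gcd(n_{\overline S},n_S)=1$) and equipping $U_{n_S}$ with the operation $x\ast y=xpy$, a short computation using $n=n_{\overline S}n_S$ gives $\psi(x\ast y)=\psi(x)\psi(y)$. Thus $\psi$ is an isomorphism from the sandwich group $(U_{n_S},\ast,p)$ onto $U_S$; in particular $U_S$ is closed under multiplication and has an identity and inverses, so it is a subgroup of the multiplicative semigroup $\z_n$, and by the isomorphism $(U_{n_S},\cdot)\cong(U_{n_S},\ast,p)$ recorded above we obtain $U_S\cong U_{n_S}$.

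For part~2, the partition $\z_n=\dot\bigcup_{S\subseteq I}U_S$ is exactly the support partition just established (with $U_\emptyset=V_\emptyset=\{0\}$), so it is a genuine disjoint union covering $\z_n$. For the semilattice law I would take $a\in U_S$, $b\in U_T$ and compute $\mathrm{supp}(ab)$ coordinatewise: since $\z_{p_i}$ has no zero divisors, $(ab)\bmod p_i\neq 0$ if and only if $a\bmod p_i\neq 0$ and $b\bmod p_i\neq 0$, whence $\mathrm{supp}(ab)=S\cap T$ and $U_SU_T\subseteq U_{S\cap T}$. This exhibits $\z_n$ as a semilattice of the groups $U_S$ over the meet-semilattice $({\cal P}(I),\cap)$, i.e.\ the boolean algebra $Y={\cal P}(I)$ with $\wedge=\cap$ and bottom $\emptyset$ matching the absorbing element $0$. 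Finally, since each component $U_S$ is a group, this is a semilattice of groups, and by the cited Theorem~4.2.1 every semilattice of groups is automatically a \emph{strong} semilattice of groups; if one wishes to exhibit them, the structure maps are $\phi^S_T\colon U_S\to U_T$ for $S\supseteq T$ given by $a\mapsto ae_T$, where $e_T$ is the idempotent identity of $U_T$, which in coordinates simply deletes the entries outside $T$.

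The coordinate bookkeeping is routine; the one place that requires care is part~1, where the natural bijection $x\mapsto n_{\overline S}x$ fails to be multiplicative and must be recognised as the sandwich isomorphism before the earlier lemma can be invoked.
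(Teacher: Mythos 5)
Your proof is correct, but it reaches the conclusion by a noticeably different route from the paper's. The paper works entirely inside $\z_n$ with bare divisibility arguments: disjointness of the $U_S$ comes from a contradiction of the form ``$p_i\mid z$ forces $i\in\overline T$''; covering comes from explicitly attaching to each $k\neq 0$ the largest $S_k$ with $p_i\mid k$; and the closure $U_SU_T\subseteq U_{S\cap T}$ is a direct computation with the factors $n_{\overline S}n_{\overline T}=n_{\overline{S\cap T}}\,n_{\overline S\cap\overline T}$. You instead push everything through the Chinese Remainder isomorphism and the support map $\mathrm{supp}\colon\z_n\to{\cal P}(I)$, which makes the partition and the semilattice law $\mathrm{supp}(ab)=\mathrm{supp}(a)\cap\mathrm{supp}(b)$ immediate (no zero divisors in each $\z_{p_i}$), and you then identify $U_S$ with the support class $V_S$ by a cardinality count $|U_S|=\phi(n_S)=|V_S|$. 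On the isomorphism $U_S\cong U_{n_S}$ the two arguments converge: both use the sandwich group $(U_{n_S},\ast,n_{\overline S})$, but where the paper simply declares this ``intuitively clear,'' you actually verify that $x\mapsto n_{\overline S}x$ intertwines $\ast$ with ordinary multiplication, which is a worthwhile addition since that map is not multiplicative for the ordinary product. What each approach buys: the paper's is self-contained and needs nothing beyond divisibility; yours makes the global structure (why the index semilattice is $({\cal P}(I),\cap)$ and why the structure maps are multiplication by the idempotents $e_T$) conceptually transparent, at the cost of invoking CRT and the general fact that a semilattice of groups is automatically strong --- a fact the paper also leans on via its citation of Howie, so that is a legitimate appeal. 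I see no gaps.
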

\begin{proof}
It is intuitively clear, and easy to show in any case, that $U_S\cong(U_{n_S},\ast,n_{\overline S})\cong U_{n_S}$. Notice that the identity in $U_S$ is $\left(n_{\overline S}\right)^{-1}$, the inverse of $n_{\overline S}$ in $U_{n_S}$. Let $S$ and $T$ be distinct subsets of $I$ and suppose that $y\in U_S\cap U_T$. Then
$$
z = n_{\overline S}x = n_{\overline T}y
$$
for some $x\in U_{n_S}, y\in U_{n_T}$. Given that $S$ and $T$ are distinct, we can assume, without loss of generality, that there exists $i \in \overline S$ such that $i\not\in \overline T$. But then $p_i|z$ and so $p_i|y$ which means that $i\in\overline T$, a contradiction. Hence $U_S\cap U_T=\emptyset$.

Now, let $0\ne k\in \z_n$ and let $S_k$ be the largest subset of $I$ such that for each $i\in S_k, p_i|k$. If $S_k=\emptyset$ then $k\in U_n$. Otherwise, $k=n_{S_k}x_k$ for some $x_k \in U_{n_{\overline{S_k}}}$ and so $k\in U_{\overline{S_k}}$. Consequently $\z_n = \dot\bigcup_{S\subseteq I} U_S$.

\medskip

Suppose then that $x\in U_{n_S}, y\in U_{n_T}$ so that $z=n_{\overline S}xn_{\overline T}y\in U_SU_T$. If $S\cap T=\emptyset$ then $\overline S\cup\overline T=I$ and so $n|n_{\overline S}n_{\overline T}$. Hence $z\equiv0\mod n$ and consequently $z\in U_{S\cap T}$. Otherwise notice that
\begin{itemize}
\item $U_{n_S}U_{n_T}\subseteq U_{n_{S\cap T}}$,
\item $n_{\overline S}n_{\overline T}=n_{\overline{S\cap T}}\;n_{\overline S\cap \overline T}$,
\item $n_{\overline S\cap \overline T}\in U_{n_S}\cap U_{n_T}$.
\end{itemize}
Consequently we deduce that $U_SU_T\subseteq U_{S\cap T}$ and $\z_n$ is a (strong) semilattice of groups. The structure maps (see~\cite{howie-95}) are given by $\phi^S_T:U_S\to U_T$ for $T\subseteq S\subseteq I$ 
$$
\phi^{S}_{T}(x) = \left(n_{\overline T}\right)^{-1}x
$$
where $\left(n_{\overline T}\right)^{-1}$ is the inverse of $n_{\overline T}$ in $U_{n_T}$.
\end{proof}

As a special case:
\begin{corollary}\label{zn-corollary}
Let $p$ and $q$ be distinct primes and let $n=pq$. Then the semigroup $\z_n$, of integers modulo $n$ under multiplication, is a strong semilattice of the four groups, $U_{pq}, U_p, U_q$ and $\{0\}$, in which the semilattice $Y$ is the 4-element Boolean algebra
\begin{center}
\begin{tikzpicture}[scale=0.5]
\node (max) at (0,4) {$pq$};
  \node (a) at (-2,2) {$q$};
  \node (c) at (2,2) {$p$};
  \node (min) at (0,0) {$0$};
  \draw (min) -- (c) -- (max) -- (a) -- (min);
\end{tikzpicture}
\end{center}
and the structure maps are given by
$$
\phi^{pq}_p(x) = q^{-1}x, \phi^{pq}_p(x) = p^{-1}x, \phi^p_0(x) = 0, \phi^q_0(x)=0,
$$
where $p^{-1}$ is the inverse of the element $p$ in the group $U_q$ and $q^{-1}$ is the inverse of the element $q$ in the group $U_p$.
\end{corollary}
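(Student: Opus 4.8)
The plan is to apply the preceding proposition in the special case $m=2$, $I=\{1,2\}$, $p_1=p$ and $p_2=q$, and then simply to unwind the notation. First I would list the four subsets of $I$ together with the groups they index. For $S=I$ we have $n_{\overline S}=1$ and $n_S=pq$, so $U_I=U_{pq}$; for $S=\{1\}$ we have $n_{\overline S}=q$ and $n_S=p$, so $U_{\{1\}}=\{qx:x\in U_p\}\cong U_p$; symmetrically $U_{\{2\}}=\{px:x\in U_q\}\cong U_q$; and $U_\emptyset=\{0\}$. By the proposition these four groups partition $\z_n$ and $\z_n={\cal S}[Y;U_S]$ with $Y={\cal P}(I)$.

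Next I would identify the semilattice. Since $|I|=2$, the Boolean algebra ${\cal P}(I)$ has exactly four elements and its Hasse diagram is the diamond, with $I$ at the top, the two singletons incomparable in the middle, and $\emptyset$ at the bottom. Relabelling each subset $S$ by the divisor $n_S$ (and writing the bottom element as $0$ to match its group $\{0\}$) turns intersection of subsets into gcd of divisors, which reproduces exactly the four-element lattice drawn in the statement.

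Finally I would read off the structure maps from the formula $\phi^S_T(x)=(n_{\overline T})^{-1}x$ of the proposition, where $(n_{\overline T})^{-1}$ is taken in $U_{n_T}$. For $\phi^{pq}_p=\phi^I_{\{1\}}$ we have $\overline T=\{2\}$, so $n_{\overline T}=q$ and its inverse is computed in $U_p$, giving $\phi^{pq}_p(x)=q^{-1}x$; symmetrically $\phi^{pq}_q(x)=p^{-1}x$ with the inverse taken in $U_q$. The two remaining maps land in $U_\emptyset=\{0\}$, so necessarily $\phi^p_0(x)=\phi^q_0(x)=0$.

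The argument is essentially bookkeeping, so I do not expect a genuine obstacle. The only point requiring care is the dictionary between the subset indexing of the proposition and the divisor indexing of the corollary, together with the degenerate case $T=\emptyset$: there the formula $\phi^S_T(x)=(n_{\overline T})^{-1}x$ must be read as the zero map, since the target group $U_\emptyset=\{0\}$ is trivial.
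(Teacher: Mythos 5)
Your proposal is correct and is essentially the paper's own route: the paper states this corollary without proof as an immediate special case of the preceding proposition, and your specialization to $m=2$, $I=\{1,2\}$, $p_1=p$, $p_2=q$, together with the translation from subset indexing to divisor indexing and the reading of $\phi^S_T(x)=(n_{\overline T})^{-1}x$ in each case, is exactly the intended bookkeeping. Your remark on the degenerate case $T=\emptyset$ (where the map must collapse to the zero map into $U_\emptyset=\{0\}$) is a sensible clarification of a point the paper leaves implicit.
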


\begin{proposition}\label{reesmatrix-proposition}
Let $S={\cal S}[Y;S_\alpha]$ be a semilattice of finite groups $S_\alpha$, in which $Y$ has a top element, $1$, say. Construct the Rees Matrix semigroup $T={\cal M}[S;I,\Lambda;P]$ over the semigroup $S$ where the entries in $P$ are all taken from the group $S_1$. Then $T$ is completely regular.
\end{proposition}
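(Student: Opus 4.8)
The plan is to show directly that every element of $T$ lies in a subgroup of $T$, which is the definition of complete regularity given above. Write a typical element as $(i,s,\lambda)$ with $i\in I$, $\lambda\in\Lambda$ and $s\in S$. Since $S$ is a semilattice of groups, $s$ belongs to exactly one of the groups $S_\alpha$. The idea is to exhibit an explicit subgroup of $T$ sitting inside the single ``box'' $\{i\}\times S_\alpha\times\{\lambda\}$ and containing $(i,s,\lambda)$, and to identify its operation with one of the twisted group structures already analysed in the preliminaries.

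First I would fix $i,\lambda$ and $\alpha$ and set $H=\{i\}\times S_\alpha\times\{\lambda\}$. The first step is to check that $H$ is closed under the multiplication of $T$. For $u,v\in S_\alpha$ we have $(i,u,\lambda)(i,v,\lambda)=(i,up_{\lambda i}v,\lambda)$, so closure reduces to showing $up_{\lambda i}v\in S_\alpha$. This is exactly where the hypothesis on $P$ is used: since $p_{\lambda i}\in S_1$ and $1$ is the top of $Y$, we have $\alpha\wedge 1=\alpha$, and property~(3) of the structure maps gives $up_{\lambda i}=u\,\phi^1_\alpha(p_{\lambda i})$ computed in $S_\alpha$ (using property~(1) for the factor $u$). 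Writing $\tilde p=\phi^1_\alpha(p_{\lambda i})\in S_\alpha$, the product $up_{\lambda i}v=u\tilde p v$ stays in $S_\alpha$, so $H$ is a subsemigroup.

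The second step is to recognise the induced operation on $H$. Identifying $(i,u,\lambda)$ with $u$, the multiplication becomes $u\ast v=u\tilde p v$ on $S_\alpha$. But this is precisely the twisted product $(G,\ast,p)$ described in the preliminaries, with $G=S_\alpha$ and $p=\tilde p$; that construction was already shown to be a group, with identity $\tilde p^{-1}$ and isomorphic to $S_\alpha$. Hence $H$ is a subgroup of $T$, and since $(i,s,\lambda)\in H$ for the $\alpha$ with $s\in S_\alpha$, every element of $T$ lies in a subgroup. This establishes that $T$ is completely regular.

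The only real subtlety, and the step I would be most careful about, is the closure computation, since everything hinges on the entries of $P$ lying in the top group $S_1$: that is what forces $\alpha\wedge 1=\alpha$ and keeps the whole product inside $S_\alpha$. Were the entries of $P$ allowed in a lower group $S_\beta$, a product would generally drop into $S_{\alpha\wedge\beta}$ and leave the box, so $H$ would no longer be closed and the argument would break. I therefore expect the verification that the top-group hypothesis is exactly what is needed to be the heart of the argument, with the remainder being a routine appeal to the already-established twisted-group construction.
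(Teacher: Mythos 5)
Your proof is correct, but it takes a genuinely different route from the paper's. The paper argues pointwise via powers: writing $x=(i,x_\alpha,\lambda)$, it computes $x^n=(i,(x_\alpha p_{\lambda i})^{n-1}x_\alpha,\lambda)$, observes that $x_\alpha p_{\lambda i}\in S_\alpha$ (the same use of the top-group hypothesis that you isolate), and chooses $n-1$ to be a multiple of the order of $x_\alpha p_{\lambda i}$ in the \emph{finite} group $S_\alpha$ to get $x^n=x$, so that $\langle x\rangle$ is a cyclic group. You instead exhibit the whole box $H=\{i\}\times S_\alpha\times\{\lambda\}$ as a subgroup, identifying its induced operation with the twisted group $(S_\alpha,\ast,\tilde p)$ from the preliminaries, where $\tilde p=\phi^1_\alpha(p_{\lambda i})$. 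The paper's argument is shorter but leans essentially on finiteness; yours never uses finiteness of the $S_\alpha$, so it proves the statement for an arbitrary (strong) semilattice of groups, and it has the added benefit of making the maximal subgroups of $T$ explicit --- which is precisely the content of the closing remark in the paper's proof that $T={\cal S}[Y;{\cal M}[S_\alpha;I,\Lambda;P_\alpha]]$, with your boxes $H$ being the ${\cal H}$-classes of the component ${\cal M}[S_\alpha;I,\Lambda;P_\alpha]$. One small point of care: the proposition's hypothesis is only that $S$ is a semilattice of groups, so your appeal to the structure maps $\phi^1_\alpha$ should be flagged as resting on the fact, quoted in the preliminaries from Howie, that any semilattice of groups is automatically a strong one; the paper's version of the argument sidesteps this by never naming $\phi^1_\alpha$ in the main computation, using only the containment $S_\alpha S_1\subseteq S_\alpha$.
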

\begin{proof}
That $T$ is a semigroup is straightforward. Let $x_\alpha\in S_\alpha$ and let $(i,x_\alpha,\lambda)\in T$. Then
$$
(i,x_\alpha,\lambda)^n = (i,(x_\alpha p_{\lambda i})^{n-1}x_\alpha,\lambda).
$$
Since $x_\alpha p_{\lambda i}\in S_\alpha$ then by letting $n-1$ be a multiple of the order of the element $x_\alpha p_{\lambda i}$ in $S_\alpha$, we see that $(i,x_\alpha,\lambda)^n=(i,x_\alpha,\lambda)$. Hence the monogenic subsemigroup, $\langle(i,x_\alpha,\lambda)\rangle$, generated by $(i,x_\alpha,\lambda)$, is actually a cyclic group and so every element of $T$ lies in a subgroup of $T$ and $T$ is completely regular.

\smallskip

In fact, if $\phi^1_\alpha:S_1\to S_\alpha$ is the structure map and if we denote by $P_\alpha$ the matrix obtained from $P$ by applying this structure map to each element of $P$, then it is reasonably clear that
$$
T = {\cal S}[Y;{\cal M}[S_\alpha;I,\Lambda;P_\alpha]].
$$
\end{proof}

\begin{corollary}\label{reesmatrix-corollary}
Let $n$ be a product of distinct primes. The semigroup $T={\cal M}[\z_n;I,\Lambda;P]$ where the entries in $P$ are taken from $U_n$ is a completely regular semigroup.
\end{corollary}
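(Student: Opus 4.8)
The plan is to recognise this corollary as a direct specialisation of Proposition~\ref{reesmatrix-proposition}, so that the substantive work is already complete; the only task is to confirm that $S=\z_n$ satisfies the hypotheses of that proposition.

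First I would invoke the opening Proposition of this section to record that, since $n$ is a product of distinct primes, $\z_n = {\cal S}[Y; U_S]$ is a strong semilattice of groups with $Y = {\cal P}(I)$ the Boolean algebra on the set $I=\{1,\dots,m\}$ of prime indices. Each component $U_S \cong U_{n_S}$ is a finite group, so $\z_n$ is a semilattice of \emph{finite} groups, exactly as required.

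Next I would pin down the top element and the associated group. Ordered by inclusion with meet given by intersection, $Y$ has top element the full set of prime indices, and from the defining formula $U_S = \{n_{\overline S}x : x\in U_{n_S}\}$ at this top element one has $\overline S=\emptyset$, hence $n_{\overline S}=1$ and $U_S = U_n$. So the top group, denoted $S_1$ in Proposition~\ref{reesmatrix-proposition}, is precisely $U_n$.

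Finally, the condition that the entries of the sandwich matrix $P$ lie in $U_n$ is exactly the requirement of Proposition~\ref{reesmatrix-proposition} that they be drawn from the top group $S_1$. With every hypothesis verified, I would apply that proposition to conclude that $T={\cal M}[\z_n;I,\Lambda;P]$ is completely regular. I anticipate no genuine obstacle here; the one point calling for care is purely notational, namely that the symbol $I$ indexing the primes in the first Proposition is unrelated to the index set $I$ of the Rees construction in the present statement, and the two must not be conflated.
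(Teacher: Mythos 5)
Your proposal is correct and matches the paper's intent exactly: the corollary is stated without proof precisely because it is the specialisation of Proposition~\ref{reesmatrix-proposition} to $S=\z_n$, using Proposition 1.1 to see that $\z_n$ is a semilattice of finite groups whose top group is $U_n$. Your explicit verification that the top element of ${\cal P}(I)$ gives $\overline S=\emptyset$, $n_{\overline S}=1$ and hence $U_S=U_n$, together with the warning about the two unrelated uses of the symbol $I$, is exactly the right level of care.
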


\bigskip

In the classic discrete log cipher, it is normal to view the cryptosystem as a group acting freely on another group by exponentiation. In more detail, let $p$ be a prime and let $G=U_{p-1}$, the group of units of the ring $\z_{p-1}$ and let $X=U_p$ the group of units of $\z_p$. An algebraic description of the  classic discrete log cipher involves defining a free action of $G$ on $X$, $G\times X\to X$, by $(m,x) \mapsto x^m$. By Fermat's little theorem,  since $x$ is a unit modulo $p$, then $x^{p-1}\equiv1\mod p$ and since $m$ is coprime to $p-1$ then there is a positive integer $k$ such that $mk\equiv1\mod p-1$. Hence $x^{mk}\equiv x\mod p$ and so $x^{mk}=x$ in $X$. Consequently $k$ is the `decrypt' key for the `encrypt' key $m$. In practice, of course we can use $\z_p$ instead of $X$ as only $0\in \z_p\setminus X$ and $0^m=0$ in $\z_p$. Notice that $\z_p$ is a completely regular semigroup being the union of the two groups $U_p$ and $\{0\}$.

\smallskip

Now let $p$ and $q$ be distinct primes and let $n=pq$. The RSA cipher can be described algebraically in a similar way to the classic discrete log cipher, by using Euler's Theorem rather than Fermat's Little Theorem. This says that if $x\in U_n$, the group of units modulo $n$, then $x^{\phi(n)}\equiv1\mod n$, where $\phi$ is Euler's totient function. The RSA cipher is then a free action of $U_{\phi(n)}$ on $U_n$ given by $(m,x)\mapsto x^m$. By the Euclidean Algorithm, we deduce that there exists a positive integer $k$ such that $km\equiv1\mod \phi(n)$ and so $x^{mk}=x$ in $U_n$. However, we can easily extend the action of $U_{\phi(n)}$ to $\z_n$ as follows. If $km\equiv1\mod \phi(n)$ then there exists $l\in\z$ such that $km=1+l\phi(n)=1+l(p-1)(q-1)$. Hence if $x\in\z_n\setminus\{0\}$ then
$$
x^{km}=x^{1+l\phi(n)} = x^{1+l(p-1)(q-1)}\equiv x\mod p.
$$
In a similar way $x^{km}\equiv1\mod q$ and so $x^{km}\equiv x\mod n$ by the Chinese remainder theorem. 

Consequently, from Corollary~\ref{zn-corollary}, we can view the RSA cipher as an action of the group $U_{\phi(n)}$ on the completely regular semigroup
$$
\z_n\cong U_{pq}\;\dot\bigcup\; U_q\;\dot\bigcup\; U_p\;\dot\bigcup\;\{0\}.
$$

\medskip

More generally, let $n$ be a positive integer and let $X$ be a finite semigroup such that $G=U_n$, the group of units of the ring $\z_n$, acts freely on $X$ by exponentiation. Then the action $G\times X\to X$ given by $(m,x)\mapsto x^m$ is the basis of a cryptosystem, if there exists $k\in G$ such that $x^{mk}=x$. Consequently, $\langle x\rangle$, the monogenic subsemigroup generated by $x$, is in fact a cyclic group, and so $X$ is a completely regular semigroup.

\bigskip

We note at this point however, that in \cite{banin-16} the authors show that the discrete log problem over a semigroup can be reduced, in polynomial time, to the discrete log problem over a subgroup of the semigroup. Not withstanding this, we describe a scheme involving a completely regular semigroup (in fact a completely simple one) which, by hiding part of the information relating to the semigroup multiplication, seems to exclude the possibility of computing this polynomial reduction. In addition, the scheme seems to offer some protection against a standard trial multiplication attack.

\section{Completely Simple Cryptosystems}

Suppose now that $S$ is a completely simple semigroup, considered as a Rees matrix semigroup ${\cal M}[G;I,\Lambda;P]$ and suppose also that $G$ is finite, of order $r$ so that $g^r = 1$ for all $g\in G$. Define an action of $U_r$, the group of units in $\z_r$, on $S$ by $n\cdot x = x^n$, so that if $x=(i,g,\lambda)$ then  $n\cdot x = (i,(gp_{\lambda i})^{n-1}g,\lambda)$. Notice that $|U_r| = \phi(r)$.

Suppose now that $n\in U_r$ so that $n$ is coprime to $r$, and hence there exists $m\in U_r$ such that $mn\equiv1\mod r$. Then
$$
x^{mn} = (i,(gp_{\lambda i})^{mn-1}g,\lambda) = (i,(gp_{\lambda i})^{mn}p_{\lambda i}^{-1},\lambda) = (i,(gp_{\lambda i})p_{\lambda i}^{-1},\lambda) = (i,g,\lambda) = x.
$$

Consequently if we know $n$, $x^n$ and $P$, then we can compute $x^{mn}$ and so recover $x$. We can in fact compute $x^{mn}$ in an efficient manner, as we can deduce the values of $i$ and $\lambda$ from $x^n$ and so we can deduce the value of $p_{\lambda i}$. Then
$$
(gp_{\lambda i})^{mn-1}g = (gp_{\lambda i})^{mn}p_{\lambda i}^{-1} = \left(\left((gp_{\lambda i})^{n-1}g\right)p_{\lambda i}\right)^mp_{\lambda i}^{-1}.
$$
The extra work involved over the classic group based scheme, involves two extra multiplications (by $p_{\lambda i}$ and $p_{\lambda i}^{-1}$) together with the computation of $p_{\lambda i}^{-1}$.

Suppose now we know $x$, $x^n$ and $G$. Can we compute $n$ and therefore solve the discrete log problem over $S$? If we also know $P$ then we know $p_{\lambda i}$ and so $(gp_{\lambda i})^n$. Consequently, the discrete log problem in this case is equivalent to that in the classic discrete log problem over the group $G$ and we are no better off using the completely simple semigroup rather than a group. Suppose however that $P$ is kept secret and that it is hard to deduce the value of $p_{\lambda i}$ from that of $i$ and $\lambda$. We know $(gp_{\lambda i})^{n-1}g$ and we know $g$ and hence we can compute $(gp_{\lambda i})^{n-1}$ but we don't know $p_{\lambda i}$ and so can't obviously recover the classic discrete log problem from this. According to~\cite{banin-16}, the discrete log problem over a semigroup, can be reduced, in polynomial time, to the classic discrete log problem in a subgroup of $S$, namely the kernel of the element $x$. However this assumes that we can compute with the semigroup $S$ and in order to do that with a Rees Matrix Semigroup, we would require knowledge of the sandwich matrix $P$. Consequently we must include the matrix $P$ as part of our secret key.

\medskip

In this application of Rees matrix semigroups, the sets $I$ and $\Lambda$ are being used as index sets to point at the value $p_{\lambda i}\in P$, and as such we clearly don't require both of these indices. Let us therefore assume, without loss of generality, that $|\Lambda|=1$ so that $S=I\times G, P = (p_i)_{i\in I}$ with multiplication given by $(i,g)(j,h) = (i,gp_jh)$ and so $(i,g)^n = (gp_i)^{n-1}g$. We will also assume from now on that $G$ is abelian.

\subsection{Chosen plaintext attacks}
Although we keep the values of $P$ secret, if the size of $I$ is small then we can consider the following chosen plaintext attack based on the existence of an oracle for solving the classic discrete log problem over the group $G$.  Suppose that $|I|=m$ and let $g_1,\ldots, g_{m+1}$ be distinct elements of $G$. Suppose also that we encrypt the values $(i,g_i)$ as $(i,g_i^np_i^{n-1})$. By the pigeon hole principle there exists $i\ne j$ such that $p_i=p_j$ and hence
$$
(g_i^np_i^{n-1})(g_j^np_j^{n-1})^{-1} = (g_ig_j^{-1})^n.
$$
Consequently we can reduce the semigroup discrete log problem over $S$ to the group discrete log problem over $G$. However, we do not know the values of $i$ and $j$ and so have to compute this quantity for each pair $1\le i,j\le m+1$, and there are $\begin{pmatrix}m+1\\2\end{pmatrix}=O(m^2)$ of these. If $m$ is relatively small, then running $m^2$ versions of the group oracle in parallel is probably feasible and consequently we need to ensure that $m$ is sufficiently large, say comparable to the size of the group $G$.

\smallskip

This clearly imposes some issues with storing the matrix $P$. If $P$ is part of the secret key then a large size for $I$ means that, in practical terms, we must compute the entries $p_i\in P$, dynamically.

\medskip

In addition, there is another potential chosen plaintext attack. Technically the value of $p_i$ is only dependant on $i$ and not on $g$. This may cause a problem, as if we could encrypt the data $(i,g)$ and $(i,g^{-1})$ then we would obtain the values $(i,(gp_i)^{n-1}g)$ and $(i,(g^{-1}p_i)^{n-1}g^{-1})$. If, as we are assuming,  $G$ is abelian, then we can calculate $(p_i^{n-1})^2$ and hence possibly $p_i^{n-1}$. Consequently we can deduce the value of $g^n$ and so again reduce the semigroup discrete log problem to the corresponding group discrete log problem. We could avert this problem if the value of $i$ was chosen in a random fashion.

\subsection{The Proposed Completely Simple Scheme}\label{cs-cipher}

Alice wants to sent Bob a secret message. Let $G$ be a finite (abelian) group and let $I=G$. Let $n\in U_{|G|}$, the group of units mod $|G|$, and $s \in I$ be two secret keys known only to Alice and Bob. Suppose also that $f:I\times I\to G$ is a function, perhaps based on a cryptographically secure hash, whose output is uniformly distributed. We encrypt $g\in G$ as follows: choose a random value $i\in I$ and let $p_i = f(i,s)$. Clearly $f$ must have the property that it is difficult to compute $f(i,s)$ from the value of $i$ alone. In addition it should be hard to calculate $s$ given $f(i,s)$ and $i$. For example the function $f(i,j) = H(i\oplus j)$ where $H$ is a suitable hash and where $i\oplus j$ is the bitwise xor of $i$ and $j$ might suffice. Alice computes $(i,(gp_i)^{n-1}g)$ as her encrypted value of $g$ to send to Bob. Bob calculates $p_i=f(i,s)$ and $m\in U_{|G|}$ such that $mn\equiv1\mod |G|$ and then computes
$$
g = \left(\left((gp_i)^{n-1}g\right)p_i\right)^mp_i^{-1}.
$$
However, as we shall see in Section~\ref{brute-force-section} below, an attacker can't easily compute $(n,p_i)$ by trial multiplication attack alone, and as long as $p_i$ is hard to deduce from the value of $i$, and $I$ is large then the two chosen plaintext attacks detailed above would appear to be infeasible.

\medskip

One other possible chosen plaintext attack comes to mind. Suppose we encrypt the value $g$ twice. The first time we obtain the encrypted value $(i,(gp_i)^{n-1}g)=(i,g^np_i^{n-1})$ and the second time the value $(j,(gp_j)^{n-1}g)=(j,g^np_j^{n-1})$. We can then deduce the value of $(p_ip_j^{-1})^{n-1}$, but as we know neither $n$ nor $p_ip_j^{-1}$ then it is hard to see what advantage we have gained. In fact even if we could deduce the value of $n$, perhaps using a different attack or some oracle, we would still need to factorise $p_ip_j^{-1}$ to deduce that values of $p_i$ and $p_j$. But in addition, this still wouldn't allow us to deduce the value of the secret key $s$ unless the function $f$ is cryptographically insecure.

\subsection{Alternative viewpoints}
We can view this completely simple cryptosystem in two alternative ways. First, let $G=(G,\cdot)$ be a group and let $p$ be a fixed element in $G$. In the isomorphic group $(G,\ast,p)$ described above, the element $x^n$ is represented by the element $(xp)^{n-1}x$ in $(G,\cdot)$. Our cryptosystem then becomes, in effect, a classic discrete log cryptosystem over the group $(G,\ast,p)$, where $p$ is chosen in the manner outlined above. However, lack of knowledge of the parameter $p$ prohibits us from computing within this group.

\medskip

Alternatively, letting $G$ be a group and $p$ a fixed element of $G$, we can view the bijection $V_p:G\to G$, $g\mapsto gp$ as a translation or shift function in which the value of $p$ is changed for each value of $g$. If we let the classic discrete log be represented by the bijection $D_n:G\to G$, $g\mapsto g^n$ then our completely simple cryptosystem is equivalent to the bijection
$$
V_p^{-1}D_nV_p : G\to G,\; g\mapsto (gp)^{n-1}g.
$$
In other words it is a conjugate of a classic discrete log system by a simple shift, the changing of the value of the shift for each block, reminding us of the Vigen\`ere cipher, albeit with an `infinitely' long key.

\subsection{Brute Force Attack}\label{brute-force-section}
For the classic discrete log cipher over a group $G$, to compute $n$ from $g$ and $g^n$ requires, at worst, $\phi(|G|)$ computations. For a careful choice of $G$ this is $O(|G|)$. Using a completely simple semigroup is significantly more expensive as not only are there more trial multiplications to consider, but the discrete log problem over $S$ seems to offer some protection to a standard trial multiplication attack. To see this, suppose we are given $(i,g)$ and $(i,(gp_i)^{n-1}g)$. Computing  $n$ using a trial multiplication attack would consists of computing $\left(gq\right)^{m-1}g$ for $1\le m\le \phi(|G|)$ and $q\in G$ in order to find the relevant pair with $(m,q) = (n,p_i)$. In principle there are a maximum of $\phi(|G|)|G|$ such computations, which is $O(|G|^2)$. However, notice that if $\gcd(m-1,|G|)=1$ then there exists $k$ such that $k(m-1)\equiv1\mod|G|$ and so for any $x\in G, x^{k(m-1)}=x$. Consequently
$$
(gp_i)^{n-1}g = \left(g\left(\left(p_ig\right)^{k(n-1)-1}p_i\right)\right)^{m-1}g
$$
and so there is no unique pair $(m,q)=(n,p_i)$ that can be computed by a simple trial multiplication attack alone.
Notice however that we also require $\gcd(m,|G|)=1$. If $|G|$ is even then of course this is impossible and so a slightly different approach is necessary. In this case, it necessary follows that $n$ must be odd and so we let $m$ be an odd integer such that $\gcd((m-1)/2,|G|)=1$. If both $n$ and $m$ are odd then $(gp_i)^{n-1}=y^2$ for $y\in G$ and so if we let $k$ be such that $k(m-1)/2\equiv1\mod|G|$ then $y^{k(m-1)/2}=y$ and hence 
$$
(gp_i)^{n-1}g = \left(g\left(g^{-1}y^k\right)\right)^{m-1}g.
$$

Consequently, not only are there, potentially, an order of magnitude more trial multiplications to perform, there are potentially many solutions $(m,q)$ to the equation
\begin{equation}
(gp_i)^{n-1}g = (gq)^{m-1}g.\label{mimics}
\end{equation}
It seems clear therefore that some other information must be gained and used in order to execute a successful trial multiplication attack. In addition, even if we could determine the values of $n$ and $p_i$, we would still need to be able to invert the function $f:I\times I\to G$ in order to determine the value of the secret $s$.

\smallskip

We shall consider the number of solutions to~\eqref{mimics} in section~\ref{imitations} below.

\smallskip

In Figure~\ref{fig:cipher} below, we demonstrate the effects of group based encryption against semigroup based encryption. An image file, Figure~\ref{fig:cipher}(a), with an 8-bit colour depth field has been encrypted using 8-bit blocks, with a value of $p=257, e=75$. The first encryption, Figure~\ref{fig:cipher}(b), uses a standard group based encryption $x\mapsto x^e\mod p$ whilst the second, Figure~\ref{fig:cipher}(c), uses a completely simple based scheme $x\mapsto (xp_i)^{e-1}x$ with secret key $s=201$ and the function $f$ based on the SHA512 hash function. Of course, such values of the parameters are unrealistically small, and we are not advocating using these cryptosystems as block ciphers as such, but it helps to demonstrate the extra diffusion incorporated in the ciphertext by the inclusion of the random data inherent in the value of  $p_i$.

\begin{figure}[H]
\centering
\begin{subfigure}[b]{.3\linewidth}
\includegraphics[width=\linewidth]{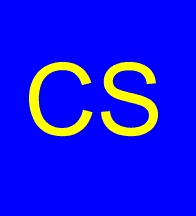}
\caption{no encryption}
\end{subfigure}
\begin{subfigure}[b]{.3\linewidth}
\includegraphics[width=\linewidth]{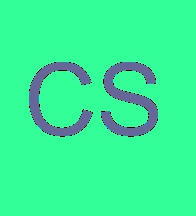}
\caption{group based cipher}
\end{subfigure}
\begin{subfigure}[b]{.3\linewidth}
\includegraphics[width=\linewidth]{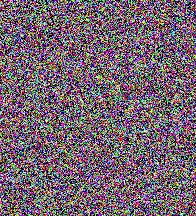}
\caption{semigroup based cipher}
\end{subfigure}
\caption{discrete log encryption on similar blocks}
\label{fig:cipher}
\end{figure}

\subsection{Completely Regular RSA cipher}

The obvious candidate for the group $G$ would be the group of units $U_p$, for $p$ a large prime, or perhaps the group associated with a suitable Elliptic Curve. However, if $n=pq$ with $p$ and $q$ large distinct primes, then Corollary~\ref{reesmatrix-corollary} implies that we can replace $G$ by $\z_n$ provided we can guarantee that the entries $p_i$ belong to $U_n$. We could then modify the RSA cipher in the following way.

Let $n=pq$ be the product of two distinct primes and let $e$ and $f$ be Bob's public and private exponents for an RSA cipher. To send a message $g$ to Bob, Alice chooses a random element $p_i\in U_n$ and, within $\z_n$, computes the pair
$$
(p_i^e,(gp_i)^{e-1}g).
$$
Bob first recovers $p_i=p_i^{ef}$ and then $g=\left(\left(\left(gp_i\right)^{e-1}g\right)p_i\right)^f$.

\smallskip

The security of this system is of course no better than that of the standard RSA cipher and has the disadvantage of resulting in twice the length of ciphertext, but it has the slight advantage of adding more diffusion by being able to encrypt two identical, non-zero, plaintext blocks into different ciphertext blocks, whilst retaining the asymmetric nature of the cryptosystem.

\section{Imitations and mimics}\label{imitations}
We have seen that brute force attacks on the completely simple system suffer from a lack of unique solutions, $(m,q)$, to the equation
\begin{equation*}
(gp_i)^{n-1}g = (gq)^{m-1}g.
\end{equation*}
In this section we show that, potentially, many such solutions exist. We initially consider a slightly more general case.

\smallskip

Let $G$ be a finite group and let $g,h,p,q\in G, n,m\in U_{|G|}$. We shall say that the triple $(m,h,q)$ {\em imitates} the triple $(n,g,p)$ if the element $h^m$ of the group $(G,\ast,q)$ coincides with the element $g^n$ of the group $(G,\ast,p)$. In other words if
$$
\left(hq\right)^{m-1}h = \left(gp\right)^{n-1}g.
$$
We wish to consider how many imitations there may be for a particular triple $(n,g,p)$.
We shall say that $h$ {\em mimics} $g$ if $(m,h,q)$ imitates $(n,g,p)$ for some $q\in G$ and some $m\in U_{|G|}$.

\begin{theorem}
Let $G$ be a finite abelian group.
\begin{enumerate}
\item If $|G|$ is odd then $h$ mimics $g$ for every $h,g\in G$.
\item If $|G|$ is even then $h$ mimics $g$ if and only if $h=gz^2$ for some $z\in G$.
\end{enumerate}
\end{theorem}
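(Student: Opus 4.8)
The plan is to translate the \emph{mimics} condition into a single solvability question inside the abelian group $G$ and then to control the images of the power maps. Write $N=|G|$ and abbreviate the fixed target as $t=(gp)^{n-1}g$; since $G$ is abelian this is just $t=g^np^{n-1}$. For a candidate pair $(m,q)$ the imitation equation $(hq)^{m-1}h=t$ becomes, again by commutativity, $h^mq^{m-1}=t$, i.e. $q^{m-1}=th^{-m}$. Hence $h$ mimics $g$ precisely when there is some $m\in U_N$ for which $th^{-m}$ lies in the image $G^{m-1}=\{q^{m-1}:q\in G\}$ of the $(m-1)$-power map. The whole argument then rests on the standard fact that in a finite abelian group $G^k=G^{\gcd(k,N)}$ and that $q\mapsto q^k$ maps $G$ \emph{onto} this subgroup, so that a suitable $q$ exists if and only if $th^{-m}\in G^{\gcd(m-1,N)}$.

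For the odd case I would simply arrange $G^{m-1}=G$. Since $N$ is odd, every prime $p\mid N$ satisfies $p\ge 3$ and so admits a residue class distinct from both $0$ and $1$; by the Chinese remainder theorem one can choose an integer $m$ with $m\not\equiv0,1\pmod p$ for each such $p$, giving simultaneously $\gcd(m,N)=1$ (so $m\in U_N$) and $\gcd(m-1,N)=1$. Then $G^{m-1}=G$, the equation $q^{m-1}=th^{-m}$ is solvable for \emph{every} $h$, and the first assertion follows.

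The even case carries the real content and splits into the two implications. The key observation is that when $N$ is even every $m\in U_N$ is odd, so $m-1$ is even and therefore $G^{m-1}=G^{\gcd(m-1,N)}\subseteq G^2$. Passing to the quotient $G/G^2$, which is an elementary abelian $2$-group, I would reduce $th^{-m}$ modulo $G^2$: because $n$ is odd and $m$ is odd we get $g^n\equiv g$, $p^{n-1}\equiv1$ and $h^{-m}\equiv h^{-1}$, whence $th^{-m}\equiv gh^{-1}\pmod{G^2}$. For the \emph{only if} direction this is immediate: if $h$ mimics $g$ then $th^{-m}\in G^{m-1}\subseteq G^2$, forcing $gh^{-1}\in G^2$, that is $h=gz^2$ for some $z\in G$.

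For the converse in the even case I would exhibit a workable $m$. Writing $N=2^aN'$ with $N'$ odd, I would use the Chinese remainder theorem to choose $m$ odd, with $m\equiv3\pmod4$ whenever $4\mid N$, and $m\not\equiv0,1\pmod p$ for each odd prime $p\mid N$; this makes $\gcd(m,N)=1$ and $\gcd(m-1,N)=2$, so $G^{m-1}=G^2$. Since $h=gz^2$ gives $th^{-m}\equiv gh^{-1}=z^{-2}\in G^2=G^{m-1}$, the equation $q^{m-1}=th^{-m}$ is solvable and $h$ mimics $g$. I expect the main obstacle to be precisely this last step: confirming that a unit $m$ with $\gcd(m-1,N)=2$ exists requires the careful $2$-adic bookkeeping (the $m\equiv3\pmod4$ condition) together with the surjectivity of $q\mapsto q^{m-1}$ onto $G^2$, rather than any single deep idea.
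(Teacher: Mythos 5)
Your proof is correct, and it packages the argument differently from the paper. The paper works by explicit construction: it picks $k\in U_{|G|}$ with $k-1\in U_{|G|}$ (odd case) or $(k-1)/2\in U_{|G|}$ (even case), citing its later counting theorems for Schemmel's totient $S$ and the function $T$ to guarantee such $k$ exist, and then writes down a concrete $q$ (e.g.\ $q=\bigl(h(x^{-1})^k\bigr)^l$) and verifies the identity $(hq)^{m-1}h=x$ by direct computation; the converse in the even case is likewise a direct manipulation showing $h=g(gq)^{k-1}\bigl((gp)^k\bigr)^{n-1}$ with $k-1$ and $n-1$ even. You instead recast ``$h$ mimics $g$'' as the solvability of $q^{m-1}=th^{-m}$ for some $m\in U_N$, invoke the structural fact that in a finite abelian group the image of the $k$-power map is $G^{\gcd(k,N)}$, and then read everything off in $G/G^2$; the existence of a unit $m$ with $\gcd(m-1,N)=1$ (resp.\ $\gcd(m-1,N)=2$) you get directly from the Chinese remainder theorem rather than from the totient-function machinery. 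The two proofs rest on the same arithmetic pivot (a unit $m$ whose $m-1$ is as close to invertible as parity allows), but your version buys two things: the ``only if'' direction in the even case becomes a one-line computation in the elementary abelian quotient $G/G^2$, and the qualitative existence of $m$ is decoupled from the quantitative theorems on $S(n)$ and $T(n)$, which the paper genuinely needs only for counting the number of imitations, not for this theorem. The paper's explicit $q$ has the minor advantage of being constructive. One small point to make airtight in your write-up: in the even ``if'' direction you need $th^{-m}\in G^2$ itself, not merely its class mod $G^2$; this does follow since $th^{-m}$ and $gh^{-1}=z^{-2}$ differ by an element of $G^2$, but it is worth saying explicitly.
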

\begin{proof}
Suppose that $g,p\in G, n\in U_{|G|}$, let $x=(gp)^{n-1}g$ and let $h\in G$.
\begin{enumerate}
\item Let $k\in U_{|G|}$ be such that $k-1\in U_{|G|}$ and suppose that $m$ and $l$ are integers such that $mk\equiv1\mod|G|, l(k-1)\equiv1\mod|G|$ (see Theorem~\ref{schemmel-theorem} below for the justification that such $k$ exist).  Define $q=\left(h(x^{-1})^k\right)^l$. Then $(hq)^{m-1}h = x$.
To see this, notice that
$$
hq = h^{l+1}(x^{-1})^{kl} = h^{kl}(x^{-1})^{kl} = (hk^{-1})^{kl}.
$$
Hence
$$
(hq)^{m-1}h = (hx^{-1})^{klm-kl}h= (hx^{-1})^{l(1-k)}h = (hx^{-1})^{-1}h = x.
$$
\item Let $k\in U_{|G|}$ be such that $(k-1)/2\in U_{|G|}$ and suppose now that $m$ and $l$ are integers such that $mk\equiv1\mod|G|, l(k-1)/2\equiv1\mod|G|$ (see Theorem~\ref{totient-theorem} below for the justification that such $k$ exist). Let $h=gy^2$ for $y\in G$. Then $n$ and $k$ are both odd and so
$$
x^{-k}h = x^{-(k-1)}x^{-1}h = x^{-(k-1)}x^{-1}gy^2 = x^{1-k}(gp)^{1-n}y^2 = z^2
$$
for some $z\in G$. Now let $q=z^l$ so that
$$
q^{k-1} = (z^l)^{k-1} = (z^2)^{l(k-1)/2} = z^2 = x^{-k}h,
$$
and then
$$
hq = (xq)^k.
$$
Hence
$$
(hq)^{m-1}h = (xq)^{km-k}h = (xq)^{1-k}h = x^{1-k}h^{-1}x^kh=x.
$$
Conversely, if $x=(hq)^{m-1}h$ for some $h,q \in G, m\in U_{|G|}$ then $xq =(hq)^m$ and so $(xq)^k = hq$, where $k$ is an integer such that $km\equiv1\mod|G|$. Hence $h = x^kq^{k-1} = g(gq)^{k-1}\left((gp)^k\right)^{n-1}$, and since $k-1$ and $n-1$ are both even, the result follows.
\end{enumerate}
\end{proof}

By~\cite[Theorem~2.11]{lucido-pournaki-2008}, the number of squares in a finite group, $G$,  of even order, denoted $G^2$, satisfies $1\le G^2\le |G|-\lfloor\sqrt{|G|}\rfloor$. In the case of a group of order $2q$ with $q$ odd then $G^2=|G|/2$~\cite[Corollary~2.3]{lucido-pournaki-2008}. So, depending on the prime factorisation of $|G|$, and regardless of whether $|G|$ is odd or even, there are potentially a large number of mimics of $g\in G$.

\bigskip

We now consider when $g$ mimics $g$. Put another way, given a finite group $G$ of order $n$ and a fixed element $y\in G$, we wish to find the number of units $m$ in $Z_n$, with the property that the equation
$$
x^{m-1}=y
$$
has a solution in $G$. 

This appears to be a rather difficult problem to find an exact solution for and so we shall aim at finding a lower bound on the number of such units $m$. If $m-1$ is also a unit in $\z_n$ with inverse $k$, then $x=y^k$ is a solution. Clearly, in this case, $n$ has to be odd. On the other hand, if $n$ is even then $m$ has to be odd and so, if there is a solution, then $y=h^2$ for some $h\in G$. In this case, suppose that $(m-1)/2$ is also a unit in $\z_n$ with inverse $k$. Then $x=h^k$ is a solution.

\medskip

We consider both these cases separately, and show that the number of such imitations is potentially large.

\medskip

First, let $n$ be an odd positive integer greater than 1. We need to find the number of values of $m$ such that $\gcd(m,n)=1$ and $\gcd(m-1,n)=1$ and $1\le m\le n-1$. Let us denote this value by $S(n)$.

\begin{theorem}\label{phi-dash-prime-theorem}
Let $p$ be an odd prime and let $e$ be a positive integer. Then $S(p^e) = p^e(1-2/p)$.
\end{theorem}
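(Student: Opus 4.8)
The plan is to translate the two coprimality conditions into congruence conditions modulo the single prime $p$ and then count the surviving residue classes. Since $n=p^e$, we have $\gcd(m,p^e)=1$ precisely when $p\nmid m$, and $\gcd(m-1,p^e)=1$ precisely when $p\nmid(m-1)$; that is, when $m\not\equiv0$ and $m\not\equiv1\pmod p$. So $S(p^e)$ is simply the number of $m$ in the relevant range whose residue modulo $p$ avoids the two classes $0$ and $1$.

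First I would replace the awkward range $1\le m\le p^e-1$ by the full residue system $\{0,1,\dots,p^e-1\}$. This changes nothing, because the only integer being reinstated, namely $m=0$, satisfies $p\mid m$ and is therefore already excluded by the condition $m\not\equiv0\pmod p$; hence it never contributes to the count. Over a complete residue system modulo $p^e$, each residue class modulo $p$ is represented exactly $p^{e-1}$ times.

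Next I would count the admissible residue classes modulo $p$. We must discard the two classes $0$ and $1$; since $p$ is an \emph{odd} prime these classes are distinct, leaving exactly $p-2$ admissible classes. Multiplying by the $p^{e-1}$ representatives of each gives
$$
S(p^e) = (p-2)\,p^{e-1} = p^e\left(1-\frac{2}{p}\right),
$$
as required. Equivalently, one can argue by inclusion--exclusion: writing $A$ for the multiples of $p$ and $B$ for the integers $m$ with $p\mid(m-1)$ in a complete system, one has $|A|=|B|=p^{e-1}$, while $A\cap B=\emptyset$ since $p\mid m$ and $p\mid(m-1)$ would force $p\mid1$; the count is then $p^e-|A|-|B|$.

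There is no genuine obstacle here: the argument is a short counting exercise once the conditions are rephrased modulo $p$. The one point demanding a little care is the hypothesis that $p$ is odd. This is exactly what guarantees that the forbidden residues $0$ and $1$ are different, so that $p-2$ classes genuinely survive; were $p=2$ the two classes would coincide (indeed exhaust all residues) and the formula would collapse to $0$. The remaining bookkeeping --- the harmlessness of trimming the range and the emptiness of $A\cap B$ --- is routine.
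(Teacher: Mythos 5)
Your proof is correct and follows essentially the same route as the paper: reduce both coprimality conditions to avoiding the residues $0$ and $1$ modulo $p$, note that each class modulo $p$ has exactly $p^{e-1}$ representatives in a complete residue system modulo $p^e$, and subtract to get $p^e-2p^{e-1}$. (One trivial aside: for $p=2$ the classes $0$ and $1$ are still distinct --- they merely exhaust all residues, which is the actual reason the count would vanish --- but this does not affect your argument for odd $p$.)
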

\begin{proof}
We need to find those $m$ in $\{1,\ldots, p^e\}$ that are neither congruent to 0 nor 1 mod $p$.
There are $p^{e-1}$ elements in the set $\{1,\ldots, p^e\}$ that are congruent to 0 mod $p$ and $p^{e-1}$ that are congruent to 1 mod $p$. Hence $S(p^e) = p^e-2p^{e-1} = p^e(1-2/p)$.
\\
\end{proof}

\begin{theorem}
The function $S$ is multiplicative in the sense that if $m$ and $n$ are coprime and odd then $S(m)S(n) = S(mn)$.
\end{theorem}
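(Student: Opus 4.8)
The plan is to prove the multiplicativity of $S$ exactly as one proves that Euler's totient $\phi$ is multiplicative, namely via the Chinese Remainder Theorem. Recall that $S(n)$ is, by its definition, the number of residues $a$ modulo $n$ (a complete residue system $\{1,\dots,n\}$ will do, since $a=n$ is automatically excluded) for which both $a$ and $a-1$ are units modulo $n$, that is $\gcd(a,n)=1$ and $\gcd(a-1,n)=1$. The goal is to exhibit a bijection between the set of residues counted by $S(mn)$ and the cartesian product of the sets counted by $S(m)$ and $S(n)$.

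First I would invoke the ring isomorphism $\z_{mn}\cong\z_m\times\z_n$ given by $a\mapsto(a\bmod m,\,a\bmod n)$, which is valid precisely because $\gcd(m,n)=1$. This is a bijection between the residues modulo $mn$ and the pairs $(b,c)$ of residues modulo $m$ and modulo $n$ respectively.

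Next comes the key step, which is to show that the two defining conditions decouple across the factors. Writing $a\equiv b\pmod m$ and $a\equiv c\pmod n$, I would observe that $\gcd(a,m)=\gcd(b,m)$ and $\gcd(a-1,m)=\gcd(b-1,m)$, with the analogous statements modulo $n$. Combined with the elementary fact that, since $m$ and $n$ are coprime, $\gcd(a,mn)=1$ holds if and only if both $\gcd(a,m)=1$ and $\gcd(a,n)=1$ (and likewise for $a-1$), this establishes that $a$ is counted by $S(mn)$ exactly when $b$ is counted by $S(m)$ and $c$ is counted by $S(n)$.

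Finally, restricting the bijection of the Chinese Remainder Theorem to these distinguished subsets yields a bijection between the set counted by $S(mn)$ and the product of the sets counted by $S(m)$ and $S(n)$, whence $S(mn)=S(m)S(n)$. I expect no genuine obstacle here; the only point requiring care is the decoupling of the condition on $a-1$, and this rests entirely on the coprimality of $m$ and $n$ rather than on their oddness. The oddness hypothesis serves only to keep both arguments within the domain on which $S$ was defined, and plays no role in the counting argument itself.
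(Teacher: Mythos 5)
Your proof is correct and is exactly the argument the paper has in mind: the paper's own proof simply states that it is identical to the standard Chinese Remainder Theorem proof of the multiplicativity of Euler's $\phi$, which is precisely what you have written out (including the observation that the gcd conditions on both $a$ and $a-1$ decouple across the coprime factors). Your closing remark that oddness is not actually needed is also consistent with the paper's later comment that the formula for $S$ remains accurate for even $n$.
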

\begin{proof}
The proof is identical to that for Euler's function $\phi(n)$ (see, for example,~\cite{jones-2005}).
\end{proof}

The following is then obvious.
\begin{theorem}\label{schemmel-theorem}
Let $n = p_1^{e_1}\ldots p_k^{e_k}$ where $p_i>2$ are distinct primes and integers $e_i\ge1$. Then
$$
S(n) = n\prod_{1\le i\le k}\left(1-\frac2{p_i}\right).
$$
\end{theorem}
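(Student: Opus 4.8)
The plan is to obtain the closed form by combining the two immediately preceding results --- the prime-power evaluation of Theorem~\ref{phi-dash-prime-theorem} and the multiplicativity established just above --- through a routine induction on the number $k$ of distinct prime factors. This is the standard argument by which any multiplicative arithmetic function is reduced to its values on prime powers, and indeed the author flags the statement as ``obvious.''

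First I would record the two hypotheses needed to invoke multiplicativity at each stage. Since every $p_i$ satisfies $p_i>2$, each prime power $p_i^{e_i}$ is odd; and because the $p_i$ are distinct, distinct prime powers $p_i^{e_i}$ are pairwise coprime. These are precisely the oddness and coprimality conditions required by the preceding multiplicativity theorem, so that theorem applies whenever we split $n$ into such factors.

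For the base case $k=1$, the desired identity $S(p_1^{e_1}) = p_1^{e_1}\left(1-\tfrac{2}{p_1}\right)$ is exactly Theorem~\ref{phi-dash-prime-theorem}. For the inductive step I would write $n = N\,p_k^{e_k}$ with $N = p_1^{e_1}\cdots p_{k-1}^{e_{k-1}}$. As $N$ and $p_k^{e_k}$ are coprime and both odd, multiplicativity yields $S(n) = S(N)\,S(p_k^{e_k})$. Applying the inductive hypothesis to $S(N)$ and Theorem~\ref{phi-dash-prime-theorem} to $S(p_k^{e_k})$ gives
$$
S(n) = \left(N\prod_{1\le i\le k-1}\left(1-\tfrac{2}{p_i}\right)\right)p_k^{e_k}\left(1-\tfrac{2}{p_k}\right) = n\prod_{1\le i\le k}\left(1-\tfrac{2}{p_i}\right),
$$
using $N\,p_k^{e_k}=n$, which is the claimed formula.

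There is no genuine obstacle here: the sole thing to check is that the oddness and coprimality hypotheses of the multiplicativity theorem are met at each split, and both follow immediately from the $p_i$ being distinct odd primes. Consequently the proof is essentially just the verification that these hypotheses hold, followed by the telescoping product above.
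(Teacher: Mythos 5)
Your proposal is correct and is exactly the argument the paper intends: the paper offers no written proof, merely prefacing the statement with ``The following is then obvious,'' and the obvious argument is precisely your induction on $k$ combining Theorem~\ref{phi-dash-prime-theorem} with the multiplicativity result, whose oddness and coprimality hypotheses you rightly verify. Nothing further is needed.
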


\bigskip

Notice that the formula is accurate even when $n$ is even. The function $S$ is often referred to as {\em Schemmel's totient function} (See~\cite{schemmel} for more detail). Recall that
$$
\phi(n) = n\prod_{1\le i\le k}\left(1-\frac1{p_i}\right).
$$

\bigskip

Suppose now that $n$ is even and we wish to count the number of odd values of $m$ such that $\gcd(m,n)=1$ and $\gcd((m-1)/2,n)=1$. Denote this value by $T(n)$. First,
\begin{theorem}
Let $e>1$ be an integer. Then $T(2)=0$ and 
$$
T(2^e) = 2^e-3\times2^{e-2}=2^{e-2}.
$$
\end{theorem}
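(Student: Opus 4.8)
The plan is to reduce the two divisibility conditions defining $T(2^e)$ to a single congruence condition modulo $4$, and then simply count residue classes. Since $n = 2^e$ is a power of $2$, being coprime to $n$ is the same as being odd; hence for an odd $m$ the requirement $\gcd(m, 2^e) = 1$ is automatic, and the only genuine constraint is $\gcd((m-1)/2, 2^e) = 1$. First I would observe that, as $m$ is odd, $(m-1)/2$ is an integer, and coprimality of this integer to the prime power $2^e$ amounts to nothing more than its being odd. Now $(m-1)/2$ is odd exactly when $m - 1 \equiv 2 \pmod 4$, that is, when $m \equiv 3 \pmod 4$. Thus $T(2^e)$ counts the integers $m$ in a complete residue system modulo $2^e$, say $\{1, \ldots, 2^e\}$, satisfying $m \equiv 3 \pmod 4$.

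For the counting step I would note that when $e > 1$ we have $4 \mid 2^e$, so each of the four residue classes modulo $4$ is represented equally often in $\{1, \ldots, 2^e\}$, namely $2^{e-2}$ times apiece. The class $m \equiv 3 \pmod 4$ therefore contributes exactly $2^{e-2}$ values, giving $T(2^e) = 2^{e-2}$. To recover the displayed form $2^e - 3 \times 2^{e-2}$ directly, I would phrase this as an exclusion count, in the style of the proof of Theorem~\ref{phi-dash-prime-theorem}: from the $2^e$ candidates we discard the $2 \cdot 2^{e-2}$ even values (those with $m \equiv 0, 2 \pmod 4$) together with the $2^{e-2}$ odd values having $m \equiv 1 \pmod 4$, a total of $3 \cdot 2^{e-2}$ excluded values, leaving $2^e - 3 \cdot 2^{e-2} = 2^{e-2}$.

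Finally I would dispose of the base case $T(2)$ by hand, since the formula $2^{e-2}$ is not even integral at $e = 1$ and so must be treated separately. For $n = 2$ the only odd residue to consider is $m = 1$, and then $(m-1)/2 = 0$, with $\gcd(0, 2) = 2 \ne 1$; hence no admissible $m$ exists and $T(2) = 0$.

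There is no serious obstacle here, as the argument is an elementary residue count. The only points that need care are the translation of $\gcd((m-1)/2, 2^e) = 1$ into the single congruence $m \equiv 3 \pmod 4$ — in particular recognising that coprimality to a power of $2$ is nothing more than oddness — and the observation that the clean quarter-of-the-range count requires $4 \mid 2^e$, i.e.\ $e > 1$, which is precisely why $e = 1$ is excluded from the formula and handled on its own.
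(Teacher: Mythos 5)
Your proof is correct and follows essentially the same route as the paper: both reduce the condition to a congruence mod $4$ and count by excluding the $2^{e-1}$ even residues and the $2^{e-2}$ residues congruent to $1 \bmod 4$, giving $2^e - 3\cdot 2^{e-2} = 2^{e-2}$. Your explicit treatment of the base case $T(2)=0$ is a small welcome addition that the paper states but does not argue.
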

\begin{proof}
If $e>1$ then we need to count the odd integers $1\le m\le 2^e$ such that $m\not\equiv0\mod 2$ and $(m-1)/2\not\equiv0\mod 2$. We count the number of terms that don't satisfy this condition and subtract it from $2^e$. Hence we count those values of $m$ such that
$$
(m\equiv0\mod 2)\lor (m\equiv1\mod 4).
$$
There are $2^{e-1}$ that satisfy the first condition and $2^{e-1}/2$ that satisfy the second. Hence the result.
\end{proof}

\begin{theorem}
Let $n$ be an odd integer. Then
$$
T(2^2n)=S(n).
$$
\end{theorem}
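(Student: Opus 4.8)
The plan is to reduce the four defining conditions for $T(4n)$ to a pair of genuinely independent congruence conditions, one modulo $4$ and one modulo $n$, and then simply count by the Chinese Remainder Theorem. Recall that $T(4n)$ counts the odd $m$ with $1\le m\le 4n$ satisfying $\gcd(m,4n)=1$ and $\gcd((m-1)/2,4n)=1$, whereas $S(n)$ is the number of residues $m$ modulo $n$ with $\gcd(m,n)=1$ and $\gcd(m-1,n)=1$ (the range $\{1,\dots,n\}$ being a complete residue system, consistent with the multiplicative formula in Theorem~\ref{schemmel-theorem}).

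First I would strip the power of $2$ out of each gcd. Since $m$ is odd we have $\gcd(m,4)=1$, and as $n$ is odd $\gcd(4,n)=1$, so $\gcd(m,4n)=\gcd(m,n)$; thus the first condition is just $\gcd(m,n)=1$. For the second, $m$ odd makes $m-1$ even, so $(m-1)/2$ is an integer, and $\gcd((m-1)/2,4n)=1$ splits, again via $\gcd(4,n)=1$, into the conjunction of $\gcd((m-1)/2,4)=1$ and $\gcd((m-1)/2,n)=1$. The first of these says $(m-1)/2$ is odd, equivalently $m-1\equiv2 \mod 4$, i.e. $m\equiv3 \mod 4$ (which in particular forces $m$ odd, so the ``$m$ odd'' clause in the definition of $T$ is subsumed). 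The second, since $n$ is odd so that $2$ is invertible modulo $n$, is equivalent to $\gcd(m-1,n)=1$.

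Hence the conditions collapse to the single constraint $m\equiv3 \mod 4$ together with the two constraints $\gcd(m,n)=1$ and $\gcd(m-1,n)=1$, now cleanly separated between the modulus $4$ and the modulus $n$. Because $\gcd(4,n)=1$, the Chinese Remainder Theorem gives $\z_{4n}\cong\z_4\times\z_n$, and as $m$ runs over $\{1,\dots,4n\}$ it realises each residue pair exactly once. There is precisely one admissible residue modulo $4$, namely the class of $3$, while the number of admissible residues modulo $n$ is, by definition, $S(n)$. Multiplying the two counts yields $T(4n)=1\cdot S(n)=S(n)$, as required.

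The only step needing any care is the equivalence $\gcd((m-1)/2,n)=1\Longleftrightarrow\gcd(m-1,n)=1$ for odd $n$, which I would justify by observing that any common divisor of $n$ and $m-1$ is odd and hence already divides $(m-1)/2$, while conversely every divisor of $(m-1)/2$ divides $m-1$, so the two gcds coincide. Beyond this bookkeeping the argument is a routine application of the Chinese Remainder Theorem, so I anticipate no genuine obstacle.
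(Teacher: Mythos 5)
Your proof is correct, and it takes a genuinely different route from the paper's. The paper argues by a double interval-pairing: it first splits $\{1,\ldots,4n\}$ into $\{1,\ldots,2n\}$ and its translate by $2n$, observing that the shift $a\mapsto a+2n$ preserves the conditions $\gcd(a,n)\ne1$ and $\gcd(a-1,n)\ne1$ while swapping the residues $1$ and $3$ modulo $4$, so the count reduces to the odd $a\le 2n$ with $\gcd(a,n)=\gcd(a-1,n)=1$; it then pairs $\{1,\ldots,n\}$ with $\{n+1,\ldots,2n\}$ via $m\mapsto m+n$, which preserves those gcd conditions but flips parity, identifying that count with $S(n)$. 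You instead isolate the three independent constraints $m\equiv3\mod 4$, $\gcd(m,n)=1$ and $\gcd(m-1,n)=1$, and multiply counts by the Chinese Remainder Theorem. All of your individual reductions --- $\gcd(m,4n)=\gcd(m,n)$ for odd $m$, the splitting of $\gcd((m-1)/2,4n)$ over the coprime factors $4$ and $n$, and $\gcd((m-1)/2,n)=\gcd(m-1,n)$ for odd $n$ --- are sound. Your version is shorter and more transparent, and it delivers the paper's subsequent theorem ($T(2^en)=2^{e-2}S(n)$ for $e>2$, proved there by a separate induction) in the same stroke, since there are exactly $2^{e-2}$ residues modulo $2^e$ that are congruent to $3$ modulo $4$. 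The one caveat worth noting is that this clean CRT factorisation is special to the case $4\mid 2^en$: for $T(2n)$ the condition that $(m-1)/2$ be odd is a condition modulo $4$ that does not factor through residues modulo $2n$, which is why the paper's pairing style (and only upper and lower bounds) reappears there.
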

\begin{proof}
To compute $T(4n)$ we need to remove from the set of residues $\{1,\ldots,4n\}$ numbers $m$ of the form
\begin{enumerate}
\item $m=2x$;
\item $m=xp$ where $p|n, p>1$ and $x$ is odd;
\item $m=1+2xp$ where $p|n,p>1$;
\item $m=1+4x$ where $\gcd(x,n)=1$, or $m=1$.
\end{enumerate}
Let $1\le a\le 2n$. Notice that $a$ satisfies (2) or (3) if and only if $a+2n$ does so as well. If $a$ does not satisfy (2) or (3), and if $a=1+4x$ with $\gcd(x,n)=1$ then $a+2n=3+4y$, while if $a=3+4y$ then $a+2n = 1+4x$ with $\gcd(x,n)=1$. Consequently, to compute $T(4n)$ we can remove all the even numbers, all the odd numbers greater than $2n$ and those odd numbers less than $2n$ that satisfy (2) or (3). So the only numbers left are those odd numbers less than $2n$ that do not satisfy (2) or (3).

We claim that the set of numbers left over has cardinality $S(n)$. Consider then the numbers $\{1,\ldots, n,\ n+1, \ldots, 2n\}$ and notice that to compute $S(n)$ we identify from $\{1,\ldots, n\}$ those numbers $m$ such that both $m$ and $m-1$ are coprime to $n$. But then $m+n$ also has this property in the set $\{n+1,\ldots, 2n\}$ but with a different parity. Hence $S(n)$ corresponds to the number of odd numbers in the list $\{1,\ldots, 2n\}$ that do not satisfy (2) or (3), as required.
\end{proof}

\smallskip

\begin{theorem}
Let $n$ be an odd integer and let $e>2$. Then
$$
T(2^en)=2T(2^{e-1}n)=2^{e-2}T(4n)=2^{e-2}S(n).
$$
\end{theorem}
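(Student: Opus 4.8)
The plan is to reduce the whole chain to the single identity $T(2^en)=2T(2^{e-1}n)$, since the remaining equalities are then immediate: iterating this identity down to exponent $2$ gives $T(2^{e-1}n)=2^{e-3}T(4n)$, whence $2T(2^{e-1}n)=2^{e-2}T(4n)$, and the preceding theorem $T(2^2n)=S(n)$ supplies $T(4n)=S(n)$. To prove the key identity I would first recast the defining conditions in a form that does not mention $e$. For $e\ge2$ and $m$ odd, $\gcd(m,2^en)=1$ is equivalent to $\gcd(m,n)=1$, while $\gcd((m-1)/2,2^en)=1$ splits into the requirement that $(m-1)/2$ be odd (that is, $m\equiv3\mod4$) together with $\gcd((m-1)/2,n)=1$. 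Hence $T(2^en)$ counts exactly those $m\in\{1,\dots,2^en\}$ satisfying the three $e$-independent conditions (1) $m\equiv3\mod4$, (2) $\gcd(m,n)=1$, and (3) $\gcd((m-1)/2,n)=1$.

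The main step is a shift bijection. I would split $\{1,\dots,2^en\}$ into the lower half $\{1,\dots,2^{e-1}n\}$ and the upper half $\{2^{e-1}n+1,\dots,2^en\}$, and consider the translation $a\mapsto a+2^{e-1}n$ between them. The claim is that this map carries the residues satisfying (1)--(3) in the lower half bijectively onto those satisfying (1)--(3) in the upper half. Since $e>2$ we have $4\mid2^{e-1}$, so $2^{e-1}n\equiv0\mod4$ and condition (1) is preserved; because $n\mid2^{e-1}n$, condition (2) is preserved; and finally $(a+2^{e-1}n-1)/2=(a-1)/2+2^{e-2}n\equiv(a-1)/2\mod n$, so condition (3) is preserved as well. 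As the number of admissible $m$ in the lower half is precisely $T(2^{e-1}n)$ — the characterisation (1)--(3) applies since $e-1\ge2$ — the bijection yields $T(2^en)=2T(2^{e-1}n)$.

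Assembling these observations gives the stated chain. The one place requiring care, and the reason the hypothesis $e>2$ cannot be dropped, is the preservation of condition (1) under the shift, which relies on $2^{e-1}n$ being divisible by $4$. For $e=2$ the shift would be by $2n\equiv2\mod4$, which turns $m\equiv3\mod4$ into $m\equiv1\mod4$ and so destroys the bijection; this is precisely why the base case $T(4n)=S(n)$ had to be handled by the separate, more delicate parity argument of the previous theorem. I expect the only genuine bookkeeping to be the verification that conditions (1)--(3) are each stable under the translation, with the modulo-$4$ check being the crux.
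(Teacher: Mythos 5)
Your proposal is correct and follows essentially the same route as the paper: the paper also partitions the residue range into two halves and uses the translation by $2^{e-1}n$ (written out explicitly for the shift by $4n$ in the case $T(8n)=2T(4n)$, with the general case left to induction on $e$), then invokes the previous theorem for $T(4n)=S(n)$. Your write-up is in fact more explicit than the paper's, since you isolate the three $e$-independent conditions and verify that each is stable under the shift, including the modulo-$4$ point that explains why $e>2$ is needed.
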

\begin{proof}
Partition the set $\{1,\ldots, 8n\}$ into the 2 subsets $\{1,\ldots, 4n\}$ and $\{1+4n,\ldots, 8n\}$. For each element in the first subset that contributes to $T(8n)$, the element $x+4n$ in the second subset also contributes to $T(8n)$. The converse is clearly true as well. Hence $T(8n) = 2T(4n)$, and the result will now follow by induction on $e$.
\end{proof}

\bigskip

Recall some basic binomial series. Let $k$ be a positive integer and define
$$
k'=k-\frac{1+(-1)^k}2, k''=k-\frac{1-(-1)^k}2.
$$
Then
\begin{gather*}
\frac{3^k-1}2 = \binom k1 + 2\binom k2 + \ldots + 2^{k-1}\binom k k\\
2^k = 1 + \binom k1+\binom k2 + \ldots +\binom kk\\
2^{k-1}=\binom k1 + \binom k3+\ldots+\binom k{k'} = \binom k0 + \binom k2 + \ldots +\binom k{k''}\\
\end{gather*}

\medskip

The final case concerns integers of the form $2n$ for $n$ an odd positive integer. The reader may wish to have a quick look at the strategy of the proof of Theorem~\ref{twon-theorem} below, before reading further. Let $n$ be an odd positive integer and let $p>1$ be an odd divisor of $n$. Define
$$
D_p=\{m|1\le m\le 2n, 4|(m-1)\text{ and } p|m\},
$$
$$
E_p=\{m|1\le m\le 2n, 4|(m-1)\text{ and } p|(m-1)\},
$$
$$
F_p = D_p\cup E_p.
$$
\begin{lemma}\label{ap-lemma}
If $p>1$ is an odd divisor of $n$ then
$$
|E_p| = \frac{n-p}{2p}, |D_p|=\frac{n\pm p}{2p}
$$
and hence
$$
|F_p| = \frac np\text{ or }|F_p|=\frac np-1.
$$
\end{lemma}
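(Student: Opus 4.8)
The plan is to realise each of $E_p$ and $D_p$ as the set of terms of a single arithmetic progression lying in the interval $[1,2n]$, and then simply to count those terms. Two structural facts drive everything: since $n$ is odd and $p\mid n$, the quotient $t=n/p$ is an \emph{odd} integer; and since $p$ is odd, $\gcd(4,p)=1$, so the Chinese Remainder Theorem lets me merge the two divisibility conditions defining each set into a single congruence modulo $4p$. I would set up all counts in terms of $t$ and then translate back via $n=pt$.

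For $E_p$, I would first note that $4\mid(m-1)$ and $p\mid(m-1)$ together are equivalent to $4p\mid(m-1)$, i.e. $m\equiv 1\pmod{4p}$. Writing $m=1+4pk$ with $k\ge 1$ (setting aside the trivial solution $m=1$) and imposing $m\le 2n=2pt$, the inequality becomes $k\le t/2-1/(4p)$; because $t$ is odd the floor resolves to $(t-1)/2$, giving $\tfrac{t-1}{2}=\tfrac{n-p}{2p}$ terms. For $D_p$ the conditions become $m\equiv 1\pmod 4$ and $m\equiv 0\pmod p$, which by CRT pin down a unique residue $r$ modulo $4p$. Here the answer splits on $p\bmod 4$: the least positive representative is $r=p$ when $p\equiv 1\pmod 4$ and $r=3p$ when $p\equiv 3\pmod 4$. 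Counting the progression $m\equiv r\pmod{4p}$ in $[1,2n]$ in the two cases yields $\tfrac{n+p}{2p}$ and $\tfrac{n-p}{2p}$ respectively, which is exactly the source of the $\pm$.

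To assemble $|F_p|$ I would first verify disjointness: any $m\in D_p\cap E_p$ would satisfy $p\mid m$ and $p\mid(m-1)$, forcing $p\mid 1$, impossible since $p>1$. Hence $|F_p|=|D_p|+|E_p|$, and substituting the two possible values of $|D_p|$ gives $\tfrac{n+p}{2p}+\tfrac{n-p}{2p}=\tfrac{n}{p}$ when $p\equiv 1\pmod 4$, and $\tfrac{n-p}{2p}+\tfrac{n-p}{2p}=\tfrac{n}{p}-1$ when $p\equiv 3\pmod 4$, matching the stated dichotomy.

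The hard part will be getting the endpoint counting exactly right: in every case the raw bound on the progression index is a half-integer before flooring, so I must track carefully how many multiples of $4p$ land in the interval, and it is precisely the oddness of $t=n/p$ together with the residue of $p$ modulo $4$ that makes each floor resolve to the stated closed form. The most delicate point is the boundary term $m=1$ in $E_p$: it is the single solution of $m\equiv 1\pmod{4p}$ that, if counted, would raise $|E_p|$ by one (and correspondingly spoil the clean value of $|F_p|$), so I would be explicit that it is excluded here, consistent with the separate treatment of the case $m=1$ needed in the sequel.
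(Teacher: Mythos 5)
Your proposal is correct and follows essentially the same route as the paper: both reduce each of $E_p$ and $D_p$ to counting terms of an arithmetic progression in $[1,2n]$, split on $p\bmod 4$ to locate the residue for $D_p$, and add the two counts using the (implicit in the paper) disjointness of $D_p$ and $E_p$. Your CRT phrasing of the congruence modulo $4p$ is just a cleaner packaging of the paper's explicit general solution of $1+4x=py$, and your care over excluding $m=1$ matches the paper's restriction to positive $x,y$.
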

\begin{proof}
Let $n>1$ be odd and let $p>1$ be an odd divisor of $n$.

\begin{enumerate}
\item Suppose that $1+4x=1+py\le 2n$ for positive integers $x$ and $y$. Then $y\le (2n-1)/p$ is congruent to 0 mod 4 and so there are $(2n-2p)/4p=(n-p)/2p$ different candidates for $y$. Hence $|E_p| = (n-p)/2p$.

\item Now suppose that $1+4x = py\le 2n$ for positive integers $x$ and $y$. Then $py\le 2n$ and $py$ is congruent to 1 mod 4. Suppose that $p\equiv1\mod(4)$. Then the general solution to the equation $1+4x = py$ is $1+4(x_0+pt)=p(1+4t)$ where $p=1+4x_0$ and $t\in \z$. So $1+4t\le (2n-p)/p$ and hence $t\le (2n-2p)/4p = (n-p)/2p$. Therefore there are $(n+p)/2p$ solutions in this case.

On the other hand, if $p\equiv3\mod(4)$ then the general solution is $1+4(x_0+pt)=p(3+4t)$ where $3p=1+4x_0$ and so $4t\le 2n/p-6$ and there are $(n-3p)/2p+1 = (n-p)/2p$ solutions in this case.

Hence $|D_p| = (n\pm p)/2p$.

\end{enumerate}
Consequently for each $p$ there are either $n/p$ or $n/p-1$ multiples of 4 that satisfy the relevant condition.
\end{proof}

\medskip

Notice that if $p>1$ and $q>1$ are distinct odd divisors of $n$ that are coprime then $D_{pq} = D_p\cap D_q$ and $E_{pq} = E_p\cap E_q$.
\begin{lemma}
Let $p>1$ and $q>1$ be distinct odd divisors of $n$. Then
$$
\frac{2n}{pq}-2\le |F_p\cap F_q| \le \frac{2n}{pq}+1.
$$
\end{lemma}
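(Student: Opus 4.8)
The plan is to decompose $F_p\cap F_q$ into residue classes modulo $4pq$ and count each one. Writing $F_p=D_p\cup E_p$ and $F_q=D_q\cup E_q$ and distributing the intersection gives
$$
F_p\cap F_q=(D_p\cap D_q)\cup(D_p\cap E_q)\cup(E_p\cap D_q)\cup(E_p\cap E_q).
$$
Each of these four sets is cut out by congruence conditions on $m$ modulo $p$ and modulo $q$ together with $4\mid(m-1)$; since two of them demand $p\mid m$ while the others demand $p\mid(m-1)$ (and likewise for $q$), and $p,q>1$, the four sets are pairwise disjoint. Using the identities $D_{pq}=D_p\cap D_q$ and $E_{pq}=E_p\cap E_q$ noted above, I would recombine the two \emph{diagonal} pieces into $F_{pq}=D_{pq}\cup E_{pq}$ and write
$$
|F_p\cap F_q|=|F_{pq}|+|D_p\cap E_q|+|E_p\cap D_q|.
$$

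For the diagonal term, since $p$ and $q$ are coprime odd divisors of $n$ the product $pq$ is itself an odd divisor of $n$ with $pq>1$, so Lemma~\ref{ap-lemma} applies verbatim to $pq$ and yields $|F_{pq}|=n/pq$ or $|F_{pq}|=n/pq-1$.

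For each off-diagonal term I would note that $D_p\cap E_q$ consists of those $m\in\{1,\dots,2n\}$ with $m\equiv0\pmod{p}$, $m\equiv1\pmod{q}$ and $m\equiv1\pmod{4}$; as $p,q,4$ are pairwise coprime the Chinese Remainder Theorem collapses this to a single residue class modulo $4pq$, and similarly for $E_p\cap D_q$. The number of integers of $\{1,\dots,2n\}$ in a fixed residue class modulo $4pq$ always lies between $\lfloor \frac{n}{2pq}\rfloor$ and $\lceil \frac{n}{2pq}\rceil$; and because $pq\mid n$ with $n/pq$ odd, these two bounds are exactly $\frac{n-pq}{2pq}$ and $\frac{n+pq}{2pq}$. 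Hence each cross term lies in $\{\frac{n-pq}{2pq},\frac{n+pq}{2pq}\}$, so their sum lies in $\{\frac{n}{pq}-1,\;\frac{n}{pq},\;\frac{n}{pq}+1\}$. Adding the diagonal and off-diagonal contributions then gives a minimum of $(\frac{n}{pq}-1)+(\frac{n}{pq}-1)=\frac{2n}{pq}-2$ and a maximum of $\frac{n}{pq}+(\frac{n}{pq}+1)=\frac{2n}{pq}+1$, which is the claim.

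The step I expect to carry the most weight is the recombination of the diagonal pieces into $F_{pq}$: bounding all four residue classes independently would only deliver $\frac{2n}{pq}\pm2$, and it is precisely the one-sided estimate $|F_{pq}|\le n/pq$ from Lemma~\ref{ap-lemma} (rather than a symmetric $\pm$ bound) that sharpens the upper bound to $+1$ while leaving the lower bound at $-2$. The only other point needing care is pinning down the exact floor/ceiling values of a residue class count modulo $4pq$, which rests entirely on the parity of $n/pq$.
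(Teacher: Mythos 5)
Your proof is correct and rests on the same four-way decomposition of $F_p\cap F_q$ into $D_p\cap D_q$, $E_p\cap E_q$, $D_p\cap E_q$ and $E_p\cap D_q$ as the paper's, but the execution of the counts is genuinely different. The paper evaluates each piece separately: the diagonal ones via $|D_{pq}|=\frac{n\pm pq}{2pq}$ and $|E_{pq}|=\frac{n-pq}{2pq}$ from Lemma~\ref{ap-lemma}, and the cross terms by explicitly parametrising the solutions of $1+4x=1+py=qz\le 2n$ and splitting the relevant interval into blocks of $q$ consecutive integers, each containing exactly one solution. Your Chinese-Remainder observation --- that each piece is a single residue class modulo $4pq$ met with $\{1,\dots,2n\}$, hence has between $\frac{n-pq}{2pq}$ and $\frac{n+pq}{2pq}$ elements because $n/pq$ is odd --- replaces that block-counting with something shorter and uniform across all four cases. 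The sharpening of the upper bound to $+1$ comes from the same underlying fact in both arguments: the paper uses that $|E_{pq}|$ equals $\frac{n-pq}{2pq}$ exactly (no $\pm$), so at most three of the four pieces attain the larger value, while you repackage this as the one-sided bound $|F_{pq}|\le n/pq$ obtained by applying Lemma~\ref{ap-lemma} to the odd divisor $pq$; these are equivalent, and either way the arithmetic yields $\frac{2n}{pq}-2$ and $\frac{2n}{pq}+1$. One small remark: both you and the paper quietly use that $p$ and $q$ are coprime --- needed for $D_{pq}=D_p\cap D_q$ and for your CRT step --- which the lemma statement does not assert but which holds in the intended application to distinct primes.
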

\begin{proof}
Suppose that $p>1$ and $q>1$ are distinct odd divisors of $n$ which are coprime. There are 4 possibilities
\begin{enumerate}
\item $1+4z=px=qy\le 2n$ for suitable $x,y,z \in\n$. Then $1+4z\in D_{pq}$ and so 
$$
|D_p\cap D_q|=|D_{pq}|=\frac{n\pm pq}{2pq}.
$$
\item $1+4z=1+px=1+qy\le2n$. Then $1+4z\in E_{pq}$ and so 
$$
|E_p\cap E_q| = |E_{pq}| = \frac{n- pq}{2pq}.
$$
\item $1+4x = 1+py = qz\le 2n$. From above the solutions to $1+4x=1+py\le 2n$ are $x=tp$ for $1\le t\le (n-p)/2p$.

We therefore need to solve $1+4pt = qz$ for $1\le t\le (n-p)/2p$. If $t_0, z_0$ is the smallest solution then the general solution is
$$
1+4pt_0+4pqs = qz_0+4pqs
$$
for $s\in\z$. This means that $t_0\le q$ and so the interval $[1,(n-p)/2p]$ can be split into $(n-pq)/2pq$ `blocks' of $q$ consecutive integers with $(n-p)/2p-(n-pq)/2p=(q-1)/2$ integers left over. Each block of $q$ integers contains exactly 1 solution and so there are either $(n-pq)/2pq$ or $(n-pq)/2pq + 1 = (n+pq)/2pq$ solutions. Hence
$$
|E_p\cap D_q|=\frac{n\pm pq}{2pq}.
$$

\item $1+4x = 1+qy=pz\le 2n$. By symmetry there are either $(n-pq)/2pq$ or $(n+pq)/2pq$ solutions. Hence
$$
|D_p\cap E_q|=\frac{n\pm pq}{2pq}.
$$
\end{enumerate}

The result then follows.
\end{proof}

\begin{lemma}
Let $p_1,\ldots, p_k$ be $k\ge 2$ distinct odd divisors of $n$. Then
$$
-2^{k-1}+\frac{2^{k-1}n}{p_1\ldots p_k}\le \left|\bigcap_{i=1}^kF_{p_i}\right|\le 2^{k-1}-1+\frac{2^{k-1}n}{p_1\ldots p_k}.
$$
\end{lemma}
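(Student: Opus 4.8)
The plan is to decompose the intersection $\bigcap_{i=1}^k F_{p_i}$ into $2^k$ disjoint cells, one for each choice, at every prime $p_i$, of whether the defining divisibility is $p_i\mid m$ or $p_i\mid(m-1)$, and then to count each cell by the Chinese Remainder Theorem. Writing $P=p_1\cdots p_k$ (where, as in the previous lemmas, the $p_i$ are taken pairwise coprime) and, for $A\subseteq\{1,\dots,k\}$,
$$
C_A=\Bigl(\bigcap_{i\in A}D_{p_i}\Bigr)\cap\Bigl(\bigcap_{i\notin A}E_{p_i}\Bigr),
$$
distributivity of $\cap$ over $\cup$ applied to $F_{p_i}=D_{p_i}\cup E_{p_i}$ gives $\bigcap_{i=1}^k F_{p_i}=\bigcup_{A}C_A$. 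These cells are pairwise disjoint: if $A\ne A'$, pick $i$ lying in exactly one of them, so one cell forces $p_i\mid m$ and the other $p_i\mid(m-1)$, which cannot both hold since $p_i>1$. Hence $\bigl|\bigcap_{i=1}^kF_{p_i}\bigr|=\sum_{A}|C_A|$.

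Next I would count each cell. Since the $p_i$ are odd and pairwise coprime, the conditions defining $C_A$ — namely $m\equiv1\pmod4$ together with $m\equiv0\pmod{p_i}$ for $i\in A$ and $m\equiv1\pmod{p_i}$ for $i\notin A$ — pin $m$ down to a single residue class modulo $4P$. Writing $n=Pt$, so that $t$ is odd because $n$ is, we have $2n=2Pt$, and a routine count of how many integers of $[1,2n]$ fall into a fixed class modulo $4P$ shows that every such class contains either $(t-1)/2$ or $(t+1)/2$ elements. Thus $(t-1)/2\le|C_A|\le(t+1)/2$ for every $A$, and summing the lower estimate over all $2^k$ cells already yields the lower bound
$$
\Bigl|\bigcap_{i=1}^kF_{p_i}\Bigr|\ge 2^k\cdot\frac{t-1}{2}=\frac{2^{k-1}n}{P}-2^{k-1}.
$$

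For the upper bound the symmetry must be broken by one distinguished cell. The cell $C_\emptyset=\bigcap_{i}E_{p_i}$ consists of those $m$ with $m\equiv1\pmod4$ and $p_i\mid(m-1)$ for every $i$, equivalently $m\equiv1\pmod{4P}$; this is the exact $k$-fold analogue of $E_p$ in Lemma~\ref{ap-lemma}, and the same computation there gives $|C_\emptyset|=(n-P)/(2P)=(t-1)/2$. This is the \emph{minimal} value, even though the class $m\equiv1\pmod{4P}$ is a priori one of the ``larger'' classes: it is the exclusion of the trivial solution $m=1$, which alone lies in every $E_{p_i}$, that pins $C_\emptyset$ to the floor. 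Bounding the remaining $2^k-1$ cells by $(t+1)/2$ then gives
$$
\Bigl|\bigcap_{i=1}^kF_{p_i}\Bigr|\le\frac{t-1}{2}+(2^k-1)\frac{t+1}{2}=\frac{2^{k-1}n}{P}+2^{k-1}-1,
$$
as required.

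The one genuinely delicate point — the step I expect to require the most care — is the evaluation of $|C_\emptyset|$: one must verify that exactly one of the $2^k$ cells is forced to the lower count $(t-1)/2$ while the others may reach $(t+1)/2$. This asymmetry is precisely what produces the lopsided bounds $-2^{k-1}$ and $+2^{k-1}-1$, and it turns entirely on treating the boundary value $m=1$ consistently with Lemma~\ref{ap-lemma}. Everything else is the elementary Chinese Remainder count together with distributivity, and one checks that the argument specialises correctly to the two-prime lemma when $k=2$.
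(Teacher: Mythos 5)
Your proof is correct, and its skeleton is the same as the paper's: distribute the intersection over the unions $F_{p_i}=D_{p_i}\cup E_{p_i}$ to obtain $2^k$ pairwise disjoint cells, bound the size of each cell, and observe that exactly one cell --- the all-$E$ cell $E_{p_1\cdots p_k}$ --- is pinned to the minimum value, which is precisely what produces the asymmetric $-2^{k-1}$ versus $+2^{k-1}-1$. Where you genuinely differ is in how the cells are counted. Writing $P=p_1\cdots p_k$ and $n=Pt$, the paper imports the values $|D_p\cap E_q|=\frac{n\pm P}{2P}$ from the two-divisor lemma, whose proof is a somewhat laborious case analysis on $p\bmod 4$ and on blocks of $q$ consecutive integers; you instead note that each cell is a single residue class modulo $4P$ (by the Chinese Remainder Theorem, using that the $p_i$ are odd and pairwise coprime --- a hypothesis you rightly make explicit, since ``distinct odd divisors'' alone would not justify it), so its intersection with $\{1,\dots,2Pt\}$ has either $\lfloor t/2\rfloor=(t-1)/2$ or $\lceil t/2\rceil=(t+1)/2$ elements. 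This is cleaner, and in fact subsumes Lemma~\ref{ap-lemma} and the two-divisor lemma as the cases $k=1,2$. You also correctly isolate the one real subtlety: as literally defined, $E_p$ contains $m=1$, which would force $|C_\emptyset|=(t+1)/2$ and push the upper bound up to $\frac{2^{k-1}n}{P}+2^{k-1}$; the paper's proof of Lemma~\ref{ap-lemma} (and the later use of these sets in computing $T(2n)$) implicitly excludes $m=1$, and your count follows that convention, which is what keeps $C_\emptyset$ at the floor value $(t-1)/2$ and delivers the stated $+2^{k-1}-1$.
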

\begin{proof}
Let $r=p_1\ldots p_k$. Notice that $\bigcap_{i=1}^kF_{p_i}=\bigcap_{i=1}^k{\left(D_{p_i}\cup E_{p_i}\right)}$. Hence
$$
\bigcap_{i=1}^kF_{p_i}=D_{r}\cup E_{r}\cup \bigcup_{p,q}\left(D_p\cap E_q\right)
$$
where $p$ and $q$ run through all products of fewer than $k$ of the terms $p_i$, such that all $k$ terms are used exactly once. From the previous lemma,
$$
|D_p\cap E_q| = \frac{n\pm r}{2r}
$$
and
$$
|E_r|=\frac{n-r}{2r}, |D_r|=\frac{n\pm r}{2r}.
$$
Hence
$$
 \frac{2^k}2\left(\frac{n}{r}-1\right)\le \left|\bigcap_{i=1}^kF_{p_i}\right|\le \frac{2^k-2}2\left(\frac{n}{r}+1\right)+\frac{n}{r}.
$$
\end{proof}

\bigskip

Now suppose that $n$ has $k$ distinct odd prime factors $p_1,\ldots, p_k$. Since there are $(2n-2)/4 = (n-1)/2$ values congruent to 1 mod 4, and excluding 1, in the set $\{1,\ldots,2n\}$, then there are
$$
\frac{n-1}2-\left|\bigcup_{i=1}^k F_{p_i}\right|
$$
values $m$, excluding 1, that are congruent to 1 mod 4 but such that neither $m$ nor $m-1$ has a factor in common with $n$. Using the Inclusion-Exclusion principle we see that this is bounded above by 
$$
\frac{n-1}2-\sum_i\left(\frac{n}{p_i}-1\right)+\sum_{i\ne j}\left(\frac{2n}{p_ip_j}+1\right)-\ldots
$$
which simplifies to
$$
-\frac 12+\frac n2\prod_{i=1}^k{\left(1-\frac2{p_i}\right)}+k+(2^1-1)\binom k2+2^2\binom k3+(2^3-1)\binom k 4+\ldots
$$

So the upper bound is
$$
\frac12\left(S(n)-1\right) +k+2^1\binom k2+2^2\binom k3+2^3\binom k 4+\ldots  -\binom k2-\binom k4 - \ldots
$$
or
$$
\frac12\left(S(n)-1\right) + \frac 12 3^k -\frac 12-\frac 12\left(2^k-2\right)=\frac12\left(S(n)+3^k-2^k\right).
$$

The lower bound is
$$
\frac{n-1}2-\sum_i\left(\frac{n}{p_i}\right)+\sum_{i\ne j}\left(\frac{2n}{p_ip_j}-2\right)-\ldots
$$
which simplifies to
$$
\frac12\left(S(n)-1\right)-\frac12\left(2^2\binom k2+2^3\binom k3+2^4\binom k 4+\ldots\right)+\binom k3+\binom k5+\ldots
$$
or
$$
\frac12\left(S(n)-1\right)-\frac12\left(3^k-2k-1\right)+\frac12\left(2^k-2k\right)=\frac12\left(S(n)-(3^k-2^k)\right).
$$

\begin{theorem}\label{twon-theorem}
Let $n>1$ be an odd integer with $k$ distinct prime divisors. Then

$$
\left|T(2n)-\frac{S(n)-1}2\right|\le\frac{3^k-2^k+1}2.
$$

If $n$ is prime, then $T(2n) = (n-3)/2=(S(n)-1)/2$ when $n\equiv3\mod(4)$ and $T(2n)=(n-1)/2=(S(n)+1)/2$ when $n\equiv1\mod(4)$.
\end{theorem}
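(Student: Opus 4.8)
The plan is to deduce the stated estimate for $T(2n)$ from the upper and lower bounds just obtained for the number of admissible residues $\equiv 1 \pmod 4$, by first establishing a clean bookkeeping identity that ties the two residue classes mod $4$ together through Schemmel's totient $S(n)$. Write $C(n) = \frac{n-1}{2} - \left|\bigcup_{i=1}^k F_{p_i}\right|$ for the quantity bounded immediately above, namely the number of $m \in \{1,\ldots,2n\}$ with $m \ne 1$, $m \equiv 1 \pmod 4$, and with neither $m$ nor $m-1$ sharing a factor with $n$; the inclusion--exclusion computation preceding the theorem gives $\frac12\left(S(n)-(3^k-2^k)\right) \le C(n) \le \frac12\left(S(n)+3^k-2^k\right)$, so $\left|C(n)-\frac{S(n)}2\right| \le \frac{3^k-2^k}2$.

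The key step is the identity $T(2n)+C(n)=S(n)$. To prove it, note that $T(2n)$ counts the odd $m\in\{1,\ldots,2n\}$ with $\gcd(m,2n)=1$ and $\gcd((m-1)/2,2n)=1$; since $m$ is odd this forces $(m-1)/2$ to be odd, i.e. $m\equiv 3\pmod 4$, and because $n$ is odd (so $2$ is invertible mod $n$) the two conditions reduce to $\gcd(m,n)=1$ and $\gcd((m-1)/2,n)=\gcd(m-1,n)=1$, each depending only on $m\bmod n$. The $n$ odd numbers $1,3,\ldots,2n-1$ form a complete residue system modulo $n$ (as $\gcd(2,n)=1$) and split into the classes $\equiv 1$ and $\equiv 3$ mod $4$; the admissible elements of the $\equiv 3$ class are counted by $T(2n)$ and those of the $\equiv 1$ class by $C(n)$, with $m=1$ harmlessly excluded since it fails $\gcd((m-1)/2,n)=1$. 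Hence $T(2n)+C(n)$ equals the number of residues $r\bmod n$ with $\gcd(r,n)=\gcd(r-1,n)=1$, which is exactly $S(n)$.

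With this identity the general bound is immediate: the interval $\left[\frac12(S(n)-(3^k-2^k)),\,\frac12(S(n)+(3^k-2^k))\right]$ is symmetric about $S(n)/2$ and contains $C(n)$, so it also contains $T(2n)=S(n)-C(n)$, giving $\left|T(2n)-\frac{S(n)}2\right|\le\frac{3^k-2^k}2$; replacing $S(n)/2$ by $(S(n)-1)/2$ via the triangle inequality costs an extra $\frac12$ and yields $\left|T(2n)-\frac{S(n)-1}2\right|\le\frac{3^k-2^k+1}2$. For the prime case $k=1$, $p_1=n$, I would read off $C(n)$ from Lemma~\ref{ap-lemma}: there $|E_n|=0$ and $|D_n|=1$ when $n\equiv 1\pmod 4$ and $|D_n|=0$ when $n\equiv 3\pmod 4$, so $C(n)=\frac{n-3}2$ or $\frac{n-1}2$ respectively; combining with $S(n)=n-2$ and $T(2n)=S(n)-C(n)$ gives $T(2n)=\frac{n-1}2=\frac{S(n)+1}2$ for $n\equiv 1\pmod 4$ and $T(2n)=\frac{n-3}2=\frac{S(n)-1}2$ for $n\equiv 3\pmod 4$, as claimed.

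The main obstacle I anticipate is establishing the bridging identity $T(2n)+C(n)=S(n)$ with the parity and range bookkeeping fully under control: verifying that the odd numbers in $\{1,\ldots,2n\}$ realise each residue mod $n$ exactly once, that both defining gcd-conditions genuinely depend only on $m\bmod n$ (using that $n$ is odd), and that the exceptional value $m=1$ together with the positivity convention built into the definitions of $D_p$ and $E_p$ is handled consistently. Once that identity is secured, the remainder is a short deduction from the already-proved inequalities and from Lemma~\ref{ap-lemma}.
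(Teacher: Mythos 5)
Your proposal is correct and follows essentially the same route as the paper: the identity $T(2n)+C(n)=S(n)$ is exactly what the paper establishes by partitioning $\{1,\dots,2n\}$ into the pairs $\{i,n+i\}$ (your complete-residue-system phrasing is an equivalent repackaging), and the bound then falls out of the symmetric interval for $C(n)$ just as in the paper. Your explicit evaluation of the prime case via Lemma~\ref{ap-lemma} matches the paper's (terser) appeal to Theorem~\ref{phi-dash-prime-theorem} and Lemma~\ref{ap-lemma}.
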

\begin{proof}
To compute $T(2n)$ we need to remove from the set of residues $R=\{1,\ldots,2n\}$ numbers $m$ of the form
\begin{enumerate}
\item $m=2x$;
\item $m=xp$ where $p|n, p>1$ and $x$ is odd;
\item $m=1+2xp$ where $p|n, p>1$;
\item $m=1+4x$ where $\gcd(x,n)=1$, or $m=1$.
\end{enumerate}
Partition the set $R$ into $n$ disjoint subsets $R_i = \{i,n+i\}$ for $1\le i\le n$. Identify those sets $R_j$ for which, either $j=1$, $\gcd(j,n)\ne1$ or $\gcd(j-1,n)\ne 1$. There are $n-S(n)$ such subsets. If $j$ is odd then $j$ satisfies either (2) or (3). If $j$ is even then $n+j$ satisfies either (2) or (3). Hence all of the elements in the $R_j$ satisfy either (1), (2) or (3) and should be removed. For the subsets $R_k$ that are left, half of the elements are even and the other half are odd and do not satisfy (1), (2) or (3). There are then $S(n)$ such odd terms, some of which may satisfy (4) (notice that we have already removed $m=1$). From above, the number of these $S(n)$ terms satisfying (4) lies in the range
$$
\left[\frac12\left(S(n)-(3^k-2^k)\right),\frac12\left(S(n)+3^k-2^k\right)\right]
$$
and so $T(n)$ lies in the range
$$
\left[\frac12\left(S(n)-(3^k-2^k)\right),\frac12\left(S(n)+3^k-2^k\right)\right].
$$

The final part follows from Theorem~\ref{phi-dash-prime-theorem} and Lemma~\ref{ap-lemma}.
\end{proof}

Notice that the definition of $T(n)$ makes sense even when $n$ is odd.

\bigskip

\begin{theorem}
Let $p$ be an odd prime and $e\ge1$ an integer. Then
$$
T(p^e) = (p^e-2p^{e-1}-1)/2 = \frac{p^e}2\left(1-\frac2p-\frac1{p^e}\right)=\frac{S(p^e)-1}2.
$$
\end{theorem}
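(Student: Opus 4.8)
The plan is to compute $T(p^e)$ directly from its definition, now read for the odd modulus $p^e$. First I would simplify the two coprimality conditions. For odd $m$ the quantity $(m-1)/2$ is an integer, and since $p^e$ is odd we have $\gcd(2,p^e)=1$, so $\gcd\!\big((m-1)/2,\,p^e\big)=\gcd(m-1,p^e)$. Because $p$ is prime, $\gcd(m,p^e)=1$ is equivalent to $p\nmid m$ and $\gcd(m-1,p^e)=1$ is equivalent to $p\nmid(m-1)$. Hence $T(p^e)$ is exactly the number of odd integers $m$ with $1\le m\le p^e$ for which $p\nmid m$ and $p\nmid(m-1)$.

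With this reduction the problem becomes a parity-sensitive counting exercise, and my plan is to count by complementation. Among the odd integers in $\{1,\dots,p^e\}$ I would discard those $m$ with $p\mid m$ and those with $p\mid(m-1)$. These two excluded families are disjoint, since no integer is simultaneously a multiple of $p$ and one more than a multiple of $p$, so no inclusion--exclusion correction is needed and $T(p^e)$ is the total number of odd residues minus the two counts.

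The three counts are then straightforward, the only care being that $p$ is odd, so that multiples of $p$ and numbers of the form $1+p j$ alternate in parity. Since $p^e$ is odd, there are $(p^e+1)/2$ odd integers in $\{1,\dots,p^e\}$. The odd multiples of $p$ are $p k$ with $k$ odd and $1\le k\le p^{e-1}$, of which there are $(p^{e-1}+1)/2$; likewise the odd integers of the form $m=1+pj$ in range force $j$ to be even, again giving $(p^{e-1}+1)/2$ of them (this family contains the boundary value $m=1$, obtained for $j=0$). Subtracting,
$$
T(p^e)=\frac{p^e+1}{2}-\frac{p^{e-1}+1}{2}-\frac{p^{e-1}+1}{2}=\frac{p^e-2p^{e-1}-1}{2}.
$$
Finally I would invoke Theorem~\ref{phi-dash-prime-theorem}, which gives $S(p^e)=p^e-2p^{e-1}$, to rewrite this as $\tfrac12\big(S(p^e)-1\big)$ and as $\tfrac{p^e}{2}\big(1-\tfrac2p-\tfrac1{p^e}\big)$, completing the chain of equalities.

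I do not expect a genuine obstacle here, as the content is entirely elementary. The one place demanding attention is the parity bookkeeping in the two subtracted families, where the oddness of $p$ is essential: were $p$ even the counts $(p^{e-1}+1)/2$ would fail to be integers and the argument would collapse. A useful sanity check I would keep in mind is that $S(p^e)=p^{e-1}(p-2)$ is odd, so $\tfrac12\big(S(p^e)-1\big)$ is an integer and the odd residues counted by $T(p^e)$ number exactly one fewer than half of the $S(p^e)$ residues counted by $S$.
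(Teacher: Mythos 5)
Your proof is correct and takes essentially the same elementary complementary-counting approach as the paper; the only difference is bookkeeping, since you restrict to the odd residues first (making the two excluded families $p\mid m$ and $p\mid m-1$ disjoint), whereas the paper subtracts from all of $\{1,\ldots,p^e\}$ the union of three congruence conditions and must correct for the overlap between the even residues and the multiples of $p$. Both routes land on $(p^e-2p^{e-1}-1)/2$ with the same handling of the boundary case $m=1$.
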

\begin{proof}
We need to count the odd integers $1\le m\le p^e$ such that $m\not\equiv0\mod p$ and $(m-1)/2\not\equiv0\mod p$. As before, we count the number of terms that don't satisfy this condition and subtract it from $p^e$. The negation of the condition is
$$
(m\equiv0\mod p)\lor (m\equiv1\mod 2p)\lor (m\equiv0\mod2).
$$
There are $p^{e-1}$ elements that satisfies the first of these, $(p^{e-1}+1)/2$ that satisfy the second and $(p^e-1)/2$ that satisfy the third. However there are $(p^{e-1}-1)/2$ that satisfy both the first and third and so the number we require is
$$
p^e-p^{e-1}-\frac{p^{e-1}+1}2-\frac{p^e-1}2+\frac{p^{e-1}-1}2 = \frac{p^e-2p^{e-1}-1}2.
$$
\end{proof}

Using techniques similar to that in Theorem~\ref{twon-theorem}, it is possible to prove the following, the details of which are omitted.

\begin{theorem}\label{phi-double-dash-theorem}
Let $n$ be an odd integer with $k$ distinct prime divisors. Then
$$
\left|T(n)- \frac{S(n)-1}2\right|\le\frac{3^k-2^{k+1}+1}{2}.
$$
\end{theorem}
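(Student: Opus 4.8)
My plan is to count $T(n)$ directly by inclusion--exclusion, paralleling the proof of Theorem~\ref{twon-theorem} but replacing the ``$\equiv 1 \bmod 4$'' bookkeeping there by a parity-of-representative bookkeeping here. The first observation is that, since $n$ is odd, $\gcd(2,n)=1$, so for odd $m$ the requirement $\gcd((m-1)/2,n)=1$ is equivalent to $\gcd(m-1,n)=1$. Thus
$$
T(n)=\#\{\,m:1\le m\le n,\ m\text{ odd},\ \gcd(m,n)=1,\ \gcd(m-1,n)=1\,\},
$$
i.e. $T(n)$ counts the \emph{odd} members among the $S(n)$ Schemmel survivors (whose total is $n\prod_i(1-2/p_i)$ by Theorem~\ref{schemmel-theorem}). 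Since heuristically half of the survivors are odd, the target inequality is the assertion that $T(n)$ deviates from $\tfrac12(S(n)-1)$ by at most $\tfrac12(3^k-2^{k+1}+1)$.

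Write $p_1,\dots,p_k$ for the distinct prime divisors of $n$, and for $S\subseteq\{1,\dots,k\}$ put $d_S=\prod_{i\in S}p_i$. Expanding the two coprimality constraints by inclusion--exclusion, each term corresponds to a choice, for each $i\in S$, of one of the two forbidden congruences $p_i\mid m$ or $p_i\mid(m-1)$, and counts the odd $m\in[1,n]$ in a single residue class modulo $2d_S$. The arithmetic key is that $n/d_S$ is an odd integer, so $n\equiv d_S\pmod{2d_S}$; consequently each such count equals $\tfrac12(n/d_S+1)$ if the least positive representative of the class lies in $[1,d_S]$, and $\tfrac12(n/d_S-1)$ if it lies in $[d_S+1,2d_S]$. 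Summing the $n/d_S$ parts against the signs $(-1)^{|S|}$ and the multiplicities $2^{|S|}$ reproduces $n\prod_i(1-2/p_i)=S(n)$, contributing exactly $\tfrac12 S(n)$; all that survives is a signed sum of the $\pm\tfrac12$ corrections.

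Collecting the corrections, I would let $G_S$ denote the number of the $2^{|S|}$ admissible classes whose odd representative falls in the lower half $[1,d_S]$, and set $\Delta_S=2G_S-2^{|S|}$, so that $\Delta_S$ is the difference between the lower-half and upper-half counts. A direct recombination then gives
$$
T(n)-\frac{S(n)-1}{2}=\frac12\sum_{S\subseteq\{1,\dots,k\}}(-1)^{|S|}\Delta_S+\frac12 .
$$
The only structural input needed is that two particular classes always lie in the lower half: the ``all $p_i\mid m$'' class, whose odd representative is $d_S$ itself, and the ``all $p_i\mid(m-1)$'' class, whose representative is $1$. Hence $2\le G_S\le 2^{|S|}$, so $\Delta_S\in[4-2^{|S|},\,2^{|S|}]$ for every $S$ with $|S|\ge1$, while the degenerate cases give $\Delta_\emptyset=1$ and $\Delta_{\{i\}}=2$. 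The inequality to be proved thus becomes $\bigl|\sum_S(-1)^{|S|}\Delta_S+1\bigr|\le 3^k-2^{k+1}+1$, in which the empty set contributes $1$, the singletons contribute $-2k$, and the trailing $\tfrac12$ accounts for the additive $1$. Using the ranges for $\Delta_S$ together with the binomial identity $3^k-2^{k+1}+1=\sum_{j\ge2}\binom{k}{j}(2^j-2)$ (a rearrangement of the identities recalled before Theorem~\ref{twon-theorem}) then yields the bound, the extreme values of $\Delta_S$ showing that the resulting estimate is sharp.

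The step I expect to be the main obstacle is the exact reduction of the corrections to the displayed form: verifying that every inclusion--exclusion term rounds by precisely $\pm\tfrac12$ with the sign governed solely by the lower/upper half, and that no finer estimate of the individual $\Delta_S$ is required. This is the analogue of the arithmetic-progression counts in Lemma~\ref{ap-lemma} and the two lemmas following it, applied now to the oddness of class representatives rather than to divisibility among numbers congruent to $1$ modulo $4$; the bookkeeping is routine but must be carried out uniformly in $S$, and the sharper constant $2^{k+1}$ (against $2^k$ in Theorem~\ref{twon-theorem}) depends precisely on the fact that both extreme classes are forced into the lower half.
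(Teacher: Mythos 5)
Your argument is correct, and it is worth noting that the paper does not actually supply a proof of this theorem --- it states only that the result follows by ``techniques similar to Theorem~\ref{twon-theorem}'', with the details omitted. Your proof is a streamlined version of exactly those techniques: where the proof of Theorem~\ref{twon-theorem} builds the sets $D_p$, $E_p$, $F_p$ and estimates their intersections one configuration at a time, you run the inclusion--exclusion over all $2^{|S|}$ sign patterns simultaneously and attach a single $\pm\tfrac12$ correction to each residue class modulo $2d_S$. The points you flag as potential obstacles do go through. Since $n/d_S$ is odd, the number of elements of $[1,n]$ in a fixed class modulo $2d_S$ is exactly $\tfrac12(n/d_S+1)$ or $\tfrac12(n/d_S-1)$ according as the least positive representative lies in $[1,d_S]$ or in $[d_S+1,2d_S]$, and no finer estimate is needed; this yields $T(n)=\tfrac12 S(n)+\tfrac12\sum_S(-1)^{|S|}\Delta_S$. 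The two forced lower-half classes are as you say: the all-``$p_i\mid(m-1)$'' choice has representative $1$, and the all-``$p_i\mid m$'' choice has representative $d_S$ itself (odd, and in $[1,d_S]$); hence $G_S\ge2$ and $\Delta_S\in[4-2^{|S|},\,2^{|S|}]$ for $|S|\ge1$, with $\Delta_\emptyset=1$ and $\Delta_{\{i\}}=2$. The bound then closes exactly: for the upper estimate one gets $2-2k+\sum_{j\ge2,\ j\ \mathrm{even}}\binom{k}{j}2^j+\sum_{j\ge2,\ j\ \mathrm{odd}}\binom{k}{j}(2^j-4)$, and subtracting $\sum_{j\ge2}\binom{k}{j}(2^j-2)=3^k-2^{k+1}+1$ leaves $2-2k+2\sum_{j\ge2}(-1)^j\binom{k}{j}=2-2k+2(k-1)=0$; the lower estimate closes by the same cancellation. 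A quick check with $n=15$ (where $S(15)=3$, $T(15)=0$ and the bound is $1$) confirms the bookkeeping. So the proposal is a complete and correct proof --- arguably cleaner than the omitted one the paper had in mind, since it dispenses with the $D_p/E_p/F_p$ apparatus entirely.
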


\bigskip

To summarise, if we let $S(1)=1$, and if $n = 2^em$ for $e\ge 0$ and $m$ odd and if $k$ is the number of prime factors of $m$, then
\begin{theorem}\label{totient-theorem}
\begin{enumerate}
\item  For $e\ge 2$
$$T(n) = 2^{e-2}S(m).$$
\item For $e=1$
$$\left|T(n)-\frac{S(m)-1}2\right|\le\frac{3^k-2^k+1}2;$$
\begin{enumerate}
\item  if $e=1$ and $m\equiv3\mod 4$ is prime
$$T(n) = \frac{S(m)-1}2=\frac{m-3}2,$$
\item  if $e=1$ and $m\equiv1\mod 4$ is prime
$$T(n) = \frac{S(m)-1}2+1=\frac{m-1}2.$$
\end{enumerate}
\item For $e=0$
$$\left|T(n)- \frac{S(m)-1}2\right|\le\frac{3^k-2^{k+1}+1}{2};$$
\begin{enumerate}
\item When $e=0$ and $k=1,$
$$T(n) = \frac{S(n)-1}2.$$
\end{enumerate}
\end{enumerate}
\end{theorem}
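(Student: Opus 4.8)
The plan is to recognise that this final theorem is purely a consolidation of the case analyses carried out in the preceding sequence of theorems, reorganised according to the exact power of $2$ dividing $n$. Writing $n = 2^e m$ with $m$ odd and $k$ the number of distinct prime divisors of $m$, each clause matches one of the earlier results after aligning the notation, since those earlier theorems use the symbol $n$ for what is here the odd part $m$. So the work is entirely bookkeeping, and the only points needing care are the boundary value $e=2$ and the reconciliation of the prime-power formulae.

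For Part~1 I would simply combine the two earlier results on even arguments. The theorem giving $T(2^2 n)=S(n)$ supplies the case $e=2$, where $2^{e-2}=2^0=1$, and the theorem giving $T(2^e n)=2^{e-2}S(n)$ for $e>2$ supplies the remaining cases; since the two formulae agree at $e=2$ they combine into the single statement $T(n)=2^{e-2}S(m)$ valid for all $e\ge 2$.

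For Part~2, with $e=1$ and hence $n=2m$, I would apply Theorem~\ref{twon-theorem} with its odd argument taken to be $m$. Its main inequality is then verbatim the bound $|T(2m)-(S(m)-1)/2|\le(3^k-2^k+1)/2$ claimed here, and the two prime subcases $2(a)$ and $2(b)$ are the final sentence of that theorem. The only arithmetic to check is that, for $m$ prime, Theorem~\ref{phi-dash-prime-theorem} gives $S(m)=m-2$, so that $(m-3)/2=(S(m)-1)/2$ in the $m\equiv 3\mod 4$ case and $(m-1)/2=(S(m)+1)/2=(S(m)-1)/2+1$ in the $m\equiv 1\mod 4$ case, matching the stated equalities.

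For Part~3, with $e=0$ and $n=m$ odd, the inequality is exactly Theorem~\ref{phi-double-dash-theorem} applied with its argument equal to $m$. For the subcase $3(a)$, the hypothesis $k=1$ forces $n=p^a$ for a single odd prime $p$, and the theorem computing $T(p^e)=(S(p^e)-1)/2$ then yields $T(n)=(S(n)-1)/2$ directly. Since none of these steps involves any new computation, there is no genuine obstacle; the proof consists of citing each earlier theorem against the correct clause and verifying that the edge cases ($e=2$ and the prime substitutions) line up as above.
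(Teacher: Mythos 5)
Your proposal is correct and is essentially the paper's own treatment: the theorem is explicitly introduced with ``To summarise'' and carries no separate proof, being exactly the consolidation of the earlier results ($T(2^2m)=S(m)$ and $T(2^em)=2^{e-2}S(m)$ for Part 1, Theorem~\ref{twon-theorem} for Part 2, Theorem~\ref{phi-double-dash-theorem} and the $T(p^e)$ computation for Part 3) that you describe. Your checks of the edge cases, in particular that $S(m)=m-2$ for $m$ prime reconciles the stated equalities in 2(a) and 2(b), are accurate.
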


\section{Summary}
Using a completely simple semigroup as outlined in Section~\ref{cs-cipher} above, to form a discrete log cryptosystem would appear to offer a more secure encryption method than the corresponding system based on groups alone. In addition, as outlined in Sections~\ref{brute-force-section} and~\ref{imitations}, there would appear to be a certain level of protection from a brute force attack. In particular, for a completely simple cryptosystem based on $\z_p$ with secret exponent $n$, and where $p$ is a large prime with the property that $p-1=2q$ with $q$ a large prime, then there are at least, $(q-3)/2 = (p-7)/4$ imitations $(m,g,q)$ for the triple $(n,g,p)$. So solving the discrete log problem by trial multiplication alone would seem to be infeasible.

\end{document}